\def\ff{{\mathcal F}}
\def\ffi{\varphi}
\def\eps{\varepsilon}
\def\dst{\displaystyle}
\def\R{{\mathbb{R}}}
\newcommand{\norm}[1]{{\left\|{#1}\right\|}}
\newcommand{\ent}[1]{{\left[{#1}\right]}}
\newcommand{\abs}[1]{{\left|{#1}\right|}}
\newcommand{\scal}[1]{{\left\langle{#1}\right\rangle}}
\newenvironment{definition}[1][]{\vskip3pt\noindent\sl\textbf{Definition.}\ }{\rm\vskip3pt}
\newenvironment{remark}[1][]{\vskip3pt\noindent\textbf{Remark.}\ }{\rm\vskip3pt}
\newtheorem{lemma}{Lemma}[section]
\newtheorem{proposition}[lemma]{Proposition}
\newtheorem{theorem}[lemma]{Theorem}
\newtheorem{corollary}[lemma]{Corollary}
\date{\today}
\newcounter{rep}
\newcommand{\rep}[1]{
		}
\newcounter{rea}
\newcommand{\rea}[1]{
		}
\newcounter{rer}
\newcounter{res}
\begin{document}

\title[Approximation of functions by finite Hermite series]{Approximation of almost time and band limited functions I:
Hermite expansions}

\author{Philippe Jaming, Abderrazek Karoui, Ron Kerman, Susanna Spektor}

\address{Philippe Jaming
\noindent Address: Institut de Math\'ematiques de Bordeaux UMR 5251,
Universit\'e Bordeaux 1, cours de la Lib\'eration, F 33405 Talence cedex, France}
\email{Philippe.Jaming@gmail.com}

\address{Abderrazek Karoui
\noindent Address: Universit\'e de Carthage,
D\'epartement de Math\'ematiques, Facult\'e des Sciences de
Bizerte, Tunisie.}
\email{Abderrazek.Karoui@fsb.rnu.tn}

\address{Ron Kerman
\noindent Address: Brock University, St. Catharines, Canada}
\email{rkerman@brocku.ca}

\address{Susanna Spektor
\noindent Address: University of Alberta, CAB 632, Edmonton, Alberta, Canada }
\email{sanaspek@gmail.com}

\begin{abstract}
The aim of this paper is to investigate the quality of approximation of almost time and band limited
functions by its expansion in the Hermite and scaled Hermite basis. As a corollary, this allows us to obtain the rate of convergence
of the Hermite expansion of function in the $L^2$-Sobolev space with fixed compact support.
\end{abstract}


\subjclass{41A10;42C15,65T99}

\keywords{Almost time and band limited functions; Hermite functions}

\maketitle

\section{Introduction}

The aim of this paper is to investigate the quality of approximation of almost time and band limited
functions by its expansion in the Hermite basis. As a corollary, this allows us to obtain the rate of convergence
of the Hermite expansion of function in the $L^2$-Sobolev space with fixed compact support.

\smallskip

Time-limited functions and band-limited functions play a fundamental role in signal and image processing.
The time-limiting assumption is natural as a signal can only be  measured over a finite duration.
The band-limiting assumption is natural as well due to channel capacity limitations. It is also
essential to apply sampling theory. Unfortunately, the simplest form of the uncertainty principle
tells us that a signal can not be simultaneously time and band limited.
A natural assumption is thus that a signal is almost time and band limited in the following sense:

\begin{definition} Let $T,\Omega>0$ and $\eps_T,\eps_\Omega>0$. A function $f\in L^2(\R)$
is said to be 
\begin{itemize}
\item $\eps_T$-\emph{almost time limited} to $[-T,T]$ if
$$
\int_{|t|>T}|f(t)|^2\,\mathrm{d}t\leq\eps_T^2\norm{f}_{L^2(\R)}^2;
$$
\item $\eps_\Omega$-\emph{almost band limited} to $[-\Omega,\Omega]$ if
$$
\int_{|\omega|>\Omega}|\widehat{f}(\omega)|^2\,\mathrm{d}\omega\leq\eps_\Omega^2\norm{f}_{L^2(\R)}^2.
$$
\end{itemize}
Here and throughout this paper the Fourier transform is normalized so that, for $f\in L^1(\R)$,
$$
\widehat{f}(\omega):=\ff[f](\omega):=\frac{1}{\sqrt{2\pi}}\int_{\R}f(t)e^{-it\omega}\,\mathrm{d}t.
$$
\end{definition}

Of course, given $f\in L^2(\R)$, for every $\eps_T,\eps_\Omega>0$ there exist $T,\Omega>0$ such that
$f$ is $\eps_T$-almost time limited to $[-T,T]$ and $\eps_\Omega$-almost time limited to $[-\Omega,\Omega]$.
The point here is that we consider $T,\Omega,\eps_T,\eps_\Omega$ as fixed parameters.
A typical example we have in mind is that $f\in H^s(\R)$ and is time-limited to $[-T,T]$.
Such  an hypothesis is common in tomography, {\it see e.g.} \cite{Nat},
where it is required in the proof of the convergence of the filtered back-projection algorithm for approximate
inversion of the Radon transform.
But, if $f\in H^s(\R)$ with $s>0$, that is if 
$$
\norm{f}_{H^s(\R)}^2:=\int_{\R}(1+|\omega|)^{2s}|\widehat{f}(\omega)|^2\,\mbox{d}\omega<+\infty,
$$
then 
\begin{eqnarray*}
\int_{|\omega|>\Omega}|\widehat{f}(\omega)|^2\,\mbox{d}\omega&\leq&
\int_{|\omega|>\Omega}\frac{(1+|\omega|)^{2s}}{(1+|\Omega|)^{2s}}|\widehat{f}(\omega)|^2\,\mbox{d}\omega\\
&\leq&\frac{\norm{f}_{H^s(\R)}^2}{(1+|\Omega|)^{2s}}.
\end{eqnarray*}
Thus $f$ is $\dst\frac{1}{(1+|\Omega|)^{s}}\frac{\norm{f}_{H^s}}{\norm{f}_{L^2(\R)}}$-almost band limited to
$[-\Omega,\Omega]$.

An alternative to the back projection algorithms in tomography are the Algebraic Reconstruction Techniques
(that is variants of Kaczmarz algorithm, {\it see} \cite{Nat}). For those algorithms to work well it is crucial to have a good
representing system (basis, frame...) of the functions that one wants to reconstruct. Thanks to the seminal
work of Landau, Pollak and Slepian, the optimal orthogonal system for representing
almost time and band limited functions is known.
The system in questions consists of the so called prolate spheroidal wave functions $\psi_k^T$
and has many valuable properties (see \cite{prolate1,prolate2,prolate3,prolate4}). Among the most striking properties they have
is that, if a function is almost time limited to $[-T,T]$ and almost band limited to $[-\Omega,\Omega]$ 
then it is well approximated by its projection on the first $4\Omega T$ terms of the basis:
\rea{with these assumptions, it is $4\Omega T$ and not $8\Omega T$}
\begin{equation}
\label{eq:prolate}
f\simeq\sum_{0\leq k<4\Omega T}\scal{f,\psi_k^T}\psi_k^T.
\end{equation}
This is a remarkable fact as this is exactly the heuristics given by Shannon's sampling formula
\rea{added reference LP2}
(note that to make this heuristics clearer, the functions are usually almost time-limited to $[-T/2,T/2]$
and this result is then known as the $2\Omega T$-theorem, see \cite{prolate2}). 

However, there is a major difficulty with prolate spheroidal wave functions that has attracted  a lot of interest recently,
namely the difficulty to compute them as there is no inductive nor closed form formula (see e.g. \cite{BKnote,BK1,Boyd1,Li,Xiao}). One approach
is to explicitly compute the coefficients of the prolate spheroidal wave functions in terms of a basis of orthogonal
polynomials like the Legendre polynomials or in the Hermite basis. The question that then arises
is that of directly approximating almost time and band limited functions by the (truncation of) their expansion
in the Hermite basis. This is the question we address here. We postpone the same question concerning Legendre
polynomials for which we use different methods.

An other motivation for this work comes from the work of the first author \cite{JP} on uncertainty principles for orthonormal bases.
There it is shown that an orthonormal basis $(e_k)$ of $L^2(\R)$
can not have uniform time-frequency localization. Several ways of measuring
localization were considered, and for most of them, the Hermite functions provided the optimal behavior.
However, in one case, the proof relied on \eqref{eq:prolate}: this shows that the set of functions
that are $\eps_T$-time limited to $[-T,T]$ and $\eps_\Omega$-band limited to $[-\Omega,\Omega]$
is almost of dimension $8\Omega T$. In particular, this set can not contain more than
a fixed number of elements of an orthonormal sequence. As this proof shows, the optimal basis
here consists of prolate spheroidal wave functions. As the Hermite basis is optimal
for many uncertainty principles, it is thus natural to ask how far it is from optimal in this case.

Let us now be more precise and describe the main results of the paper.

\medskip

Recall that the Hermite basis $(h_k)_{k\geq 0}$ is an orthonormal basis of $L^2(\R)$ given by
$h_k=\alpha_k\dst e^{x^2/2}\partial^k e^{-x^2}$ where $\alpha_k$ is a normalization constant.
Recall also that the $h_k$'s are eigenfunction of the Fourier transform. Morover,
as is well known the $h_k$'s satisfy a second order differential equation. This allows us
to use the standard WKB method to approximate the Hermite functions as follows: let $\lambda=\sqrt{2n+1}$,
$p(x)=\sqrt{\lambda^2-x^2}$ and $\dst\ffi(x)=\int_0^xp(t)\,\mbox{d}t$,
then, for $|x|<\lambda$,
\begin{equation}
\label{eq:intro1}
h_k(x)=h_n(0)\sqrt{\frac{\lambda}{p(x)}}\cos\ffi(x)+h_n^\prime(0)\frac{\sin\ffi(x)}{\sqrt{\lambda p(x)}}+error.
\end{equation}
This formula is not new (e.g. \cite{Do,kochtataru,larsson}).
However, we will need a precise estimate of the error term, both in the $L^\infty$ sense for which we improve
the one given in \cite{BKH} and the Lipschitz bound. 

A first consequence of this formula is that the $L^2$-mass of $h_n$ is essentially concentrated in an annulus
of radius $\sqrt{2n+1}$ and width $\leq 1$ of the time-frequency plane. 
A second consequence is the approximaion over $[-T,T]\times[-T,T]$ of the kernel
$$k_n(x,y)=\sum_{k=0}^n h_k(x) h_k(y).$$
\rea{A more precise statement of the approximation of $k_n(x,y)$}
More precisely, by using  \eqref{eq:intro1} and the Christoffel-Darboux formula, one gets for  $n\geq 2T^2$:
\begin{equation}
\label{eq:intro2}
k_n(x,y)=\frac{1}{\pi}\frac{\sin N(x-y)}{x-y}+R_n(x,y),
\end{equation}
where
$$N=\frac{\sqrt{2n+1}+\sqrt{2n+3}}{2},\quad
|R_n(x,y)|\leq \frac{17 T^2}{\sqrt{2n+1}}.$$
Again, this approximation is not new \cite{Sa,Us} but we improve the error estimates. Nonetheless, from
numerical evidences, our previous theoretical   error estimate is still far from the actual error.
\rea{We move here the definition of the operator $\mathcal R_n.$}
Next, let $\mathcal R_n^T$ be the Hilbert-Schmidt operator defined on $L^2([-T,T])$ by
\begin{equation}
\label{integral_operator}
\mathcal R_n^T f(x)= \int_{[-T,T]} R_n(x,y) f(y)\, dy.
\end{equation}
The heuristic is then as follows. Assume that $f$ is $(\eps_T,\eps_{\Omega})$ time and band limited in $[-T,T]\times[-\Omega,\Omega]$.
Thus $f$ is only ``correlated'' to the first $\sim N:=\max(T^2,\Omega^2)$ Hermite functions\,:
$|\scal{f,h_k}|$ is small if $k>N$.
\rea{A more precise statement of the error}
One may thus expect that $f=\sum_{0\leq k \leq N}\scal{f,h_k}h_k+error,$ where the error has 
a satisfactory decay rate with respect to $N.$
This seems unfortunately not to be the case. We establish that for $n\geq N$, the $error=f-\sum_{0\leq k \leq n}\scal{f,h_k}h_k$
has an $L^2$-norm bounded by $\lesssim T^3/\sqrt{2n+1} +\eps_T+\eps_{\Omega}$:

\begin{theorem}\label{th:mainhermite}
Let $\Omega_0,T_0\geq2$ and $\eps_T,\eps_\Omega>0$. Assume that
$$
\int_{|t|>T_0}|f(t)|^2\,\mathrm{d}t\leq \eps_T^2\norm{f}_{L^2(\R)}^2
\quad\mbox{and}\quad
\int_{|\omega|>\Omega_0}|\widehat{f}(\omega)|^2\,\mathrm{d}\omega\leq \eps_\Omega^2\norm{f}_{L^2(\R)}^2.
$$
For $n$ an integer, let $K_nf$ be the orthogonal projection of $f$
on the span of $h_0,\ldots,h_n$.

Assume that $n\geq \max(2T^2,2\Omega^2)$. Then, for $T\geq T_0$,
\begin{equation}
\label{eq:approxhermlocintro}
\norm{f-K_nf}_{L^2([-T,T])}
\leq\left(\eps_\Omega+\frac{34T^3}{\sqrt{2n+1}}+2\eps_T\right)\norm{f}_{L^2(\R)}
\end{equation}
\end{theorem}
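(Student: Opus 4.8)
The plan is to exploit the kernel approximation \eqref{eq:intro2} to compare $K_n f$ with a simpler band-limiting operator. Write $K_n f(x) = \int_{\R} k_n(x,y) f(y)\,\d y$ and split the integral over $[-T,T]$ and $|y|>T$. On $[-T,T]$, substitute \eqref{eq:intro2} to get
\begin{equation*}
K_n f(x) = \int_{-T}^{T}\frac{1}{\pi}\frac{\sin N(x-y)}{x-y} f(y)\,\d y + \mathcal{R}_n^T f(x) + \int_{|y|>T} k_n(x,y) f(y)\,\d y
\end{equation*}
for $x\in[-T,T]$. The first term is (the restriction to $[-T,T]$ of) the action of the Fourier band-limiting projector $P_N$ onto $[-N,N]$ applied to $f\mathbf{1}_{[-T,T]}$; more precisely $\frac1\pi\frac{\sin N(x-y)}{x-y}$ is the kernel of $P_N$, so this term equals $P_N(f\mathbf{1}_{[-T,T]})(x)$. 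I would then use the elementary three-term bookkeeping: $f - K_n f = (f - P_N f) + (P_N f - P_N(f\mathbf 1_{[-T,T]})) - \mathcal R_n^T f - \int_{|y|>T}k_n(x,y)f(y)\,\d y$ on $[-T,T]$, and estimate each piece in $L^2([-T,T])$ (or in $L^2(\R)$ where convenient, since the $[-T,T]$-norm is smaller).

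The individual estimates should go as follows. First, $\norm{f - P_N f}_{L^2(\R)} = \bigl(\int_{|\omega|>N}|\widehat f(\omega)|^2\,\d\omega\bigr)^{1/2} \le \eps_\Omega \norm{f}_{L^2(\R)}$, using $N \ge \Omega$, which follows from $n \ge 2\Omega^2$ together with $N = \tfrac{\sqrt{2n+1}+\sqrt{2n+3}}2 \ge \sqrt{2n+1} \ge \sqrt{4\Omega^2+1} > \Omega$; this contributes the $\eps_\Omega$ term. Second, $\norm{P_N f - P_N(f\mathbf 1_{[-T,T]})}_{L^2(\R)} \le \norm{f - f\mathbf 1_{[-T,T]}}_{L^2(\R)} = \bigl(\int_{|t|>T}|f|^2\bigr)^{1/2} \le \eps_T\norm{f}_{L^2(\R)}$ since $P_N$ is an orthogonal projection and $T\ge T_0$; this is one of the $\eps_T$'s. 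Third, the Hilbert–Schmidt norm of $\mathcal R_n^T$ is controlled by the pointwise bound $|R_n(x,y)|\le 17T^2/\sqrt{2n+1}$ over $[-T,T]^2$, giving $\norm{\mathcal R_n^T}_{HS}^2 \le (2T)^2 (17T^2)^2/(2n+1)$, hence $\norm{\mathcal R_n^T f}_{L^2([-T,T])} \le \tfrac{34 T^3}{\sqrt{2n+1}}\norm{f}_{L^2([-T,T])} \le \tfrac{34T^3}{\sqrt{2n+1}}\norm{f}_{L^2(\R)}$; this produces the middle term with exactly the constant $34$. Finally, the tail term $\int_{|y|>T}k_n(x,y)f(y)\,\d y$: here I would write it as $K_n(f\mathbf 1_{|y|>T})(x)$, and since $K_n$ is an orthogonal projection its $L^2(\R)$-norm is at most $\norm{f\mathbf 1_{|y|>T}}_{L^2(\R)} \le \eps_T\norm{f}_{L^2(\R)}$, contributing the second $\eps_T$. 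Summing the four bounds via the triangle inequality in $L^2([-T,T])$ yields \eqref{eq:approxhermlocintro}.

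The one subtlety, and the step I expect to require the most care, is the identification of the leading kernel $\tfrac1\pi\tfrac{\sin N(x-y)}{x-y}$ with a genuine Fourier projection so that the $\eps_\Omega$ bound can be invoked cleanly: one must be careful that $N$ is slightly larger than $\sqrt{2n+1}$ and verify $N\ge\Omega$, and one must handle the fact that $\int_{-T}^T \tfrac1\pi\tfrac{\sin N(x-y)}{x-y}f(y)\,\d y = P_N(f\mathbf 1_{[-T,T]})(x)$ holds as stated (the sinc kernel reproduces $P_N$ on all of $L^2(\R)$, and we have simply inserted the truncation inside). Everything else is routine: triangle inequality, boundedness of orthogonal projections, and the Hilbert–Schmidt estimate for an integral operator with a bounded kernel on a set of measure $2T$. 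A final bookkeeping remark: the hypotheses are stated with $T_0,\Omega_0\ge 2$ and the conclusion for $T\ge T_0$; I would simply carry $T\ge T_0\ge 2$ throughout (which also justifies $n\ge 2T^2\ge 2T_0^2\ge 8$ and makes the error term genuinely small), and note that the almost-band-limitedness at level $\Omega_0$ transfers to level $\Omega\le\text{anything}\le N$ by monotonicity of the tail integral.
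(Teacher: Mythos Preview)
Your proposal is correct and follows essentially the same route as the paper's proof. The paper introduces the projections $P_Tf=\mathbf{1}_{[-T,T]}f$ and $Q_\Omega f=\ff^{-1}[\mathbf{1}_{[-\Omega,\Omega]}\widehat f]$ (your $P_N$), rewrites Theorem~\ref{th:estkn} as the operator identity $P_TK_nP_Tf=P_TQ_NP_Tf+P_T\mathcal{R}_n^TP_Tf$, and then performs exactly your four-term decomposition via the triangle inequality, obtaining the same bounds $\eps_\Omega$, $\eps_T$, $34T^3/\sqrt{2n+1}$, and $\eps_T$ for the respective pieces; your explicit integral splitting and the paper's projection bookkeeping are notational variants of one another.
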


In particular, on would need $\sim T^6/\eps^2$ terms to reach an error $\lesssim\eps$. The above heuristics
suggest that the right power of $T$ may \eqref{eq:approxhermlocintro} should be closer to $1$.
\rep{Rephrased sentence on scaling}
We will show how one can decrease the dependence on $T$ by replacing the Hermite basis by a scaled version
$h_k^\alpha(x)=\alpha^{1/2}h_n(\alpha x)$ at the expense of a worse dependence on the almost band-limitness of $f$.

\rea{A  more realistic error approximation}\rep{removed this from introduction}

\medskip

The remaining of this paper is organized as follows. The next section is devoted to the approximation
of Hermite functions by the WKB method. We then devote Section \ref{sec:kernel} and \ref{sec:kernel2} to
establish properties of the kernel of the projection on the Hermite functions. In Section \ref{sec:mainth}
we first prove Theorem \ref{th:mainhermite}. Then, we give the  quality of approximation of almost time and band limited functions by the scaled Hermite functions. 
Finally, in the last section, we give various numerical 
examples that illustrate the different results of this work.

\section{Approximating Hermite functions with the WKB method}\label{sec:wkb}

\subsection{The WKB method}

Let $H_n$ be the $n$-th Hermite polynomial, that is
$$
H_n(x)=e^{x^2}\frac{\mathrm{d}^n}{\mathrm{d}x^n}e^{-x^2}.
$$
Define the Hermite functions as
$$
h_n(x)=\alpha_nH_n(x)e^{-x^2/2}\quad\mbox{where }\alpha_n=\frac{1}{\pi^{1/4}\sqrt{2^nn!}}.
$$
As is well known:
\begin{enumerate}
\renewcommand{\theenumi}{\roman{enumi}}
\item $(h_n)_{n\geq 0}$ is an orthonormal basis of $L^2(\R)$.
\item $h_n$ is even if $n$ is even and odd if $n$ is odd, in particular
$h_{2p}^{\prime}(0)=0$ and $h_{2p+1}(0)=0$.
\item $\dst h_{2p}(0)=\frac{(-1)^p}{\pi^{1/4}}\sqrt{\frac{(2p-1)!!}{(2p)!!}}=\frac{(-1)^p}{\sqrt{\pi}p^{1/4}}\left(1-\frac{\eta_{2p}}{8p}\right)$ with $0<\eta_{2p}<1$.

\item  $\dst h_{2p+1}^{\prime}(0)=\frac{(-1)^p\sqrt{4p+2}}{\pi^{1/4}}\sqrt{\frac{(2p-1)!!}{(2p)!!}}
=\frac{(-1)^p\sqrt{4p+3}}{\sqrt{\pi}p^{1/4}}\left(1-\frac{\eta_{2p+1}}{4p}\right)$\\
with $|\eta_{2p+1}|<1$.

\item $h_n$ satisfies the differential equation
\begin{equation}
\label{eq:diffherm}
h_n^{\prime\prime}(x)+(2n+1-x^2)h_n(x)=0.
\end{equation}
\end{enumerate}

We will now follow the WKB method to obtain an approximation of $h_n$.

In order to simplify notation, we will fix $n$ and drop all supscripts during the computation.
Let $h=h_n$, $\lambda=\sqrt{2n+1}$, and define $p(x)$ and $\ffi(x)$ for $|x|<\lambda$ as
$$
p(x)=\sqrt{\lambda^2-x^2}\quad\mbox{and}\quad\ffi(x)=\int_0^xp(t)\,\mbox{d}t.
$$
Note that \eqref{eq:diffherm} reads $h''(x)+p(x)h(x)=0$. Let us define
$$
\psi_\pm(x)=\frac{1}{\sqrt{p(x)}}\exp\pm i\ffi(x)\qquad|x|<\lambda.
$$
\begin{remark}
These functions are introduced according to the standard WKB method. The factor $\exp\pm i\ffi(x)$
would be the solution of \eqref{eq:diffherm} if $p$ where a constant. The factor $p^{-1/2}$ is here to make
the wronskian of $\psi_+,\psi_-$ constant. Indeed, as $\ffi'=p$, a simple computation shows that
\begin{equation}
\label{eq:psiprime}
\psi_\pm^\prime(x)=\left(-\frac{1}{2}\frac{p'(x)}{p(x)}\pm ip(x)\right)\psi_\pm(x).
\end{equation}
It follows that
\begin{eqnarray}
\psi_+(x)\psi_-^\prime(x)-\psi_-(x)\psi_+^\prime(x)&=&\psi_+(x)\psi_-(x)\abs{\begin{matrix}1&1\\
-\frac{1}{2}\frac{p'(x)}{p(x)}+ ip(x)&-\frac{1}{2}\frac{p'(x)}{p(x)}- ip(x)\end{matrix}}\nonumber\\
&=&\frac{1}{p(x)}\times\bigl(-2ip(x)\bigr)=-2i.
\label{eq:wronskian}
\end{eqnarray}
\end{remark}
Using \eqref{eq:psiprime} it is not hard to see that $\psi_\pm$ both satisfy the differential equation
\begin{equation}
\label{eq:diffpsi}
y''+(p^2-q)y=0\qquad\mbox{where }q=\frac{1}{2}\left(\frac{p'}{p}\right)'-\frac{1}{4}\left(\frac{p'}{p}\right)^2.
\end{equation}
A simple computation shows that $q(x)=-\dst\frac{2\lambda^2+3x^2}{4p(x)^4}$.
We will frequently use that $|q(x)|\leq\dst\frac{5\lambda^2}{4p(x)^4}$.
Note also that, if $0<\eta<1$ and $|x|\leq\lambda(1-\eta)$ then $p(x)\geq\lambda\sqrt{2\eta-\eta^2}\geq \lambda\sqrt{\eta}$
while $q(x)\leq\dst\frac{5}{4\lambda^2\eta^2}$.

\smallskip
Now multiplying \eqref{eq:diffpsi} by $h$, \eqref{eq:diffherm} by $\psi_\pm$ and substracting both results, we obtain
$$
h''\psi_\pm-\psi_\pm^{\prime\prime}h+qh\psi_\pm=0.
$$
On the other hand, $h''\psi_\pm-\psi_\pm^{\prime\prime}h=(h'\psi_\pm-\psi_\pm^{\prime}h)'$. Therefore,
\begin{equation}
\label{eq:wkbdiff}
(h'\psi_\pm-\psi_\pm^{\prime}h)'=-qh\psi_\pm.
\end{equation}
Let us now define
$$
Q_\pm(x)=\int_0^xq(t)h(t)\psi_\pm(t)\,\mbox{d}t.
$$
Integrating \eqref{eq:wkbdiff} between $0$ and $x$, we obtain the system
$$
\left\{\begin{matrix}
h'(x)\psi_+(x)&-&h(x)\psi_+^{\prime}(x)&=&h'(0)\psi_+(0)-h(0)\psi_+^{\prime}(0)&-&Q_+(x)\\
h'(x)\psi_-(x)&-&h(x)\psi_-^{\prime}(x)&=&h'(0)\psi_-(0)-h(0)\psi_-^{\prime}(0)&-&Q_-(x)\\
\end{matrix}\right..
$$
According to \eqref{eq:wronskian} the determinant of this system is $-2i$, we can thus solve it for $h(x)$. This leads to
\begin{eqnarray*}
h(x)&=&h(0)\frac{\psi_+^\prime(0)\psi_-(x)-\psi_-^\prime(0)\psi_+(x)}{2i}\\
&&+h'(0)\frac{\psi_-(0)\psi_+(x)-\psi_+(0)\psi_-(x)}{2i}\\
&&+\frac{Q_+(x)\psi_-(x)-Q_-(x)\psi_+(x)}{2i}.
\end{eqnarray*}
It remains to identify those 3 terms. First, note that
$\psi_+(0)=\psi_-(0)=1/\sqrt{p(0)}=1/\sqrt{\lambda}$
while $\dst\psi_+^{\prime}(0)=\overline{\psi_-^{\prime}(0)}=\left(-\frac{1}{2}\frac{p'(0)}{p(0)}+ip(0)\right)\psi_+(0)=i\sqrt{\lambda}$.
From this, we get
\begin{eqnarray*}
\frac{\psi_+^\prime(0)\psi_-(x)-\psi_-^\prime(0)\psi_+(x)}{2i}&=&\sqrt{\frac{\lambda}{p(x)}}\frac{e^{i\ffi(x)}+e^{-i\ffi(x)}}{2}\\
&=&\sqrt{\frac{\lambda}{p(x)}}\cos\ffi(x).
\end{eqnarray*}
Further,
\begin{eqnarray*}
\frac{\psi_-(0)\psi_+(x)-\psi_+(0)\psi_-(x)}{2i}&=&
\frac{1}{\sqrt{\lambda p(x)}}\frac{e^{i\ffi(x)}-e^{-i\ffi(x)}}{2i}\\
&=&\frac{1}{\sqrt{\lambda p(x)}}\sin\ffi(x).
\end{eqnarray*}
Finally,
\begin{eqnarray*}
\frac{Q_+(x)\psi_-(x)-Q_-(x)\psi_+(x)}{2i}&=&\frac{1}{\sqrt{p(x)}}\int_0^x\frac{q(t)}{\sqrt{p(t)}}h(t)\frac{e^{i\ffi(t)}e^{-i\ffi(x)}
-e^{-i\ffi(t)}e^{i\ffi(x)}}{2i}\,\mbox{d}t\\
&=&\frac{1}{\sqrt{p(x)}}\int_0^x\frac{q(t)}{\sqrt{p(t)}}h(t)\sin\bigl(\ffi(t)-\ffi(x)\bigr)\,\mbox{d}t.
\end{eqnarray*}

We are now in position to prove the following theorem:

\begin{theorem}
\label{th:approxherm}
Let $n\geq 0$. Assume that $|x|\leq\sqrt{2n+1}$, then
\begin{eqnarray}
h_n(x)&=&h_n(0)\left(\frac{2n+1}{2n+1-x^2}\right)^{1/4}\cos\ffi_n(x)+h_n^{\prime}(0)
\frac{\sin\ffi_n(x)}{\bigl((2n+1)(2n+1-x^2)\bigr)^{1/4}}\notag\\
&&+E_n(x)
\label{eq:approxherm}
\end{eqnarray}
where
\begin{equation}
\label{eq:approxhermest}
\ffi_n(x)=\int_0^x\sqrt{2n+1-t^2}\,\mbox{d}t
\quad\mbox{and}\quad
|E_n(x)|\leq\frac{5}{4}\left(\frac{\sqrt{2n+1}}{2n+1-x^2}\right)^{5/2}.
\end{equation}
Moreover, for $(2n+1)^{-a}<\eta<1$, $a<3/20$ and $x,y\leq(1-\eta)\sqrt{2n+1}$
\begin{equation}
\label{eq:LipestE}
|E_n(x)-E_n(y)|\leq\frac{5}{(2n+1)^{3/4-5a}}|x-y|.
\end{equation}
Further, if $|x|,|y|\leq T\leq\frac{\sqrt{2n+1}}{2}$,
$$
\ffi_n(x)=\sqrt{2n+1}x-e_n(x),
$$
where
\begin{equation}
\label{eq:esten}
|e_n(x)|\leq \frac{T^3}{3\sqrt{2n+1}}
\quad\mbox{and}\quad
|e_n(x)-e_n(y)|\leq \frac{T^2}{\sqrt{2n+1}}|x-y|,
\end{equation}
while
\begin{equation}
\label{eq:estEn}
|E_n(x)|\leq\frac{2}{(2n+1)^{3/2}}
\quad\mbox{and}\quad |E_n(x)-E_n(y)|\leq\frac{8}{(2n+1)^{5/4}}|x-y|.
\end{equation}
\end{theorem}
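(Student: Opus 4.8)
The computation carried out just before the statement already gives, for $|x|<\lambda:=\sqrt{2n+1}$,
\[
h_n(x)=h_n(0)\sqrt{\tfrac{\lambda}{p(x)}}\cos\ffi_n(x)+h_n'(0)\frac{\sin\ffi_n(x)}{\sqrt{\lambda p(x)}}+E_n(x),\qquad
E_n(x)=\frac{1}{\sqrt{p(x)}}\int_0^x\frac{q(t)}{\sqrt{p(t)}}\,h_n(t)\sin\!\bigl(\ffi_n(t)-\ffi_n(x)\bigr)\,\mathrm{d}t;
\]
since $\sqrt{\lambda/p(x)}=\bigl(\tfrac{2n+1}{2n+1-x^2}\bigr)^{1/4}$ and $1/\sqrt{\lambda p(x)}=\bigl((2n+1)(2n+1-x^2)\bigr)^{-1/4}$, the first two terms are exactly those of \eqref{eq:approxherm}, so the theorem reduces to estimating $E_n$ and $\ffi_n$. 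As $\ffi_n$ is odd and $|E_n|$ is even (by parity of $h_n$), I may assume $x,y\ge0$.

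For the sup bound I would use $|\sin|\le1$ and feed the identity back into the integral: by (iii)--(iv), $|h_n(0)|\sqrt\lambda\le1$ and $|h_n'(0)|/\sqrt\lambda\le1$, and $h_n(0)h_n'(0)=0$, so $|h_n(t)|\le p(t)^{-1/2}+|E_n(t)|$. Setting $u(x)=\sqrt{p(x)}\,|E_n(x)|$ this gives $u(x)\le a(x)+\int_0^x a'(t)\,u(t)\,\mathrm{d}t$, where $a(x):=\int_0^x\frac{|q(t)|}{p(t)}\,\mathrm{d}t$; using $q(t)=-\tfrac{2\lambda^2+3t^2}{4p(t)^4}$ and $\lambda^2=p(t)^2+t^2$ one computes $a(x)=\tfrac{x(6\lambda^2-x^2)}{12\lambda^2p(x)^3}\le\tfrac{x}{2p(x)^3}$, which is increasing with $a(0)=0$, so Gr\"onwall's lemma yields $u(x)\le a(x)e^{a(x)}$. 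If $p(x)\ge(\lambda/2)^{1/3}$ then $a(x)\le1$, hence $|E_n(x)|\le\tfrac{e\,a(x)}{\sqrt{p(x)}}\le\tfrac{e\,x}{2p(x)^{7/2}}$, and this is $\le\tfrac54\bigl(\tfrac{\lambda}{p(x)^2}\bigr)^{5/2}$ because $x(\lambda^2-x^2)^{3/4}\le\sqrt{\tfrac25}\,(\tfrac35)^{3/4}\lambda^{5/2}<\tfrac{5}{2e}\lambda^{5/2}$ on $[0,\lambda]$. If $p(x)<(\lambda/2)^{1/3}$, i.e.\ $x$ is within $O(\lambda^{2/3})$ of the turning point $\lambda$, the two main terms are each $\le p(x)^{-1/2}$, so $|E_n(x)|\le\|h_n\|_\infty+p(x)^{-1/2}$, and $\|h_n\|_\infty p(x)^5+p(x)^{9/2}\le\tfrac54\lambda^{5/2}$ follows from $p(x)^5,p(x)^{9/2}\le(\lambda/2)^{5/3},(\lambda/2)^{3/2}$ together with $\|h_n\|_\infty\le2$ (Cram\'er's inequality even gives $\le1.09$). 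This proves \eqref{eq:approxhermest}.

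For the Lipschitz bound I would differentiate the representation; the contribution of the upper limit vanishes ($\sin0=0$) and
\[
E_n'(x)=\frac{x}{2p(x)^2}\,E_n(x)-\sqrt{p(x)}\int_0^x\frac{q(t)}{\sqrt{p(t)}}\,h_n(t)\cos\!\bigl(\ffi_n(t)-\ffi_n(x)\bigr)\,\mathrm{d}t,
\]
so $|E_n'(x)|\le\tfrac{x}{2p(x)^2}|E_n(x)|+\sqrt{p(x)}\int_0^x\tfrac{|q(t)|}{\sqrt{p(t)}}|h_n(t)|\,\mathrm{d}t$. On $\{|x|\le(1-\eta)\lambda\}$ one has $p(x)\ge\lambda\sqrt\eta$; there \eqref{eq:approxhermest} gives $|E_n(t)|\le p(t)^{-1/2}$ (this uses $\lambda^2\eta^{9/4}\ge\tfrac54$), hence $|h_n(t)|\le2p(t)^{-1/2}$, and with the closed form of $a$ both terms are $\lesssim\lambda^{-3/2}\eta^{-5/4}+\lambda^{-7/2}\eta^{-7/2}$. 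Since $\eta>(2n+1)^{-a}$ this is bounded by $\tfrac{5}{(2n+1)^{3/4-5a}}$, which is a genuine (decreasing) estimate precisely because $a<3/20$; integrating $E_n'$ between $y$ and $x$ gives \eqref{eq:LipestE}.

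Finally, for $|x|,|y|\le T\le\lambda/2$ one has $p\ge\tfrac{\sqrt3}{2}\lambda$, so nothing is near a turning point. For the phase, $e_n(x)=\lambda x-\ffi_n(x)=\int_0^x\bigl(\lambda-\sqrt{\lambda^2-t^2}\bigr)\,\mathrm{d}t=\int_0^x\tfrac{t^2}{\lambda+p(t)}\,\mathrm{d}t$, whence $|e_n(x)|\le\tfrac1\lambda\int_0^{|x|}t^2\,\mathrm{d}t=\tfrac{|x|^3}{3\lambda}\le\tfrac{T^3}{3\sqrt{2n+1}}$ and $|e_n'(x)|=\tfrac{x^2}{\lambda+p(x)}\le\tfrac{T^2}{\sqrt{2n+1}}$, i.e.\ \eqref{eq:esten}. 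For $E_n$ the closed form gives $a(x)=O(\lambda^{-2})$ here, so the Gr\"onwall bound collapses to $|E_n(x)|\le\tfrac{e\,a(x)}{\sqrt{p(x)}}$, and in the displayed formula for $E_n'(x)$ every factor $p(\cdot)^{-1}$ is $O(\lambda^{-1})$; a direct estimate of the constants then yields \eqref{eq:estEn}. The one real difficulty is the sup bound: the integrand for $E_n$ contains $h_n$ itself, which forces the self-consistent inequality $|h_n|\le p^{-1/2}+|E_n|$ and the Gr\"onwall step, after which the Gr\"onwall bound (of the right size away from $\pm\lambda$) must be glued to the crude bound (needed near $\pm\lambda$, where both it and the exact $E_n$ blow up faster than $(\lambda/p^2)^{5/2}$) to get a single estimate valid on all of $|x|\le\sqrt{2n+1}$; the rest is bookkeeping of constants and the calibration $a<3/20$.
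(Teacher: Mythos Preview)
Your proof is correct but takes a genuinely different route from the paper on the two main estimates for $E_n$.

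For the sup bound \eqref{eq:approxhermest}, the paper bypasses any pointwise control of $h_n$: it applies Cauchy--Schwarz directly to the integral defining $E_n$, using only $\|h_n\|_{L^2}=1$, to get
\[
|E_n(x)|\le \frac{1}{\sqrt{p(x)}}\left(\int_0^x\frac{q(t)^2}{p(t)}\,\mathrm{d}t\right)^{1/2},
\]
and then the crude bound $|q|\le\frac{5\lambda^2}{4p^4}$ together with monotonicity of $p$ yields \eqref{eq:approxhermest} in one line --- no Gr\"onwall iteration, no case split at $p(x)=(\lambda/2)^{1/3}$, no external input such as Cram\'er's inequality for $\|h_n\|_\infty$. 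Your self-consistent Gr\"onwall argument is correct and even gives sharper information away from the turning points (the bound $a(x)e^{a(x)}/\sqrt{p(x)}$ is $O(\lambda^{-2})$ for $|x|\le\lambda/2$, which you exploit for \eqref{eq:estEn}), but the gluing to the crude bound near $\pm\lambda$ is exactly the extra work that the $L^2$ trick makes unnecessary.

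For the Lipschitz bounds \eqref{eq:LipestE} and \eqref{eq:estEn} the comparison is reversed: your direct differentiation of $E_n$, using that the boundary term vanishes and $\ffi_n'=p$, is more streamlined than the paper's approach. The paper introduces an auxiliary two-variable function $\Phi(x,y)=\int_0^x\chi(y,t)\,\mathrm{d}t$, splits $E_n(y)-E_n(x)$ into three pieces $E_1+E_2+E_3$ (coming from varying $p(\cdot)^{-1/2}$, the upper limit, and the phase $\ffi_n(\cdot)$ separately), and then bootstraps the already-proved sup bound back into the representation of $h_n$ to control $E_2$. Both routes ultimately feed the pointwise estimate $|h_n(t)|\lesssim p(t)^{-1/2}$ (which follows from \eqref{eq:approxhermest}) into the integrand, so the final exponents agree; your remark that $a<3/20$ is precisely the condition $3/4-5a>0$ making the bound decrease in $n$ is the correct reading of that hypothesis. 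The treatment of $e_n$ is essentially the same in both proofs.
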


\begin{remark}
One may explicitly compute $\ffi$:
$$
\ffi_n(x)=\frac{2n+1}{2}\arcsin\frac{x}{\sqrt{2n+1}}+\frac{x}{2}\sqrt{2n+1-x^2}=\sqrt{2n+1}x-e(x),
$$
where
$$
e_n(x)=\frac{1}{2}\ent{(2n+1)\left(\frac{x}{\sqrt{2n+1}}-\arcsin\frac{x}{\sqrt{2n+1}}\right)
+x\left(\sqrt{2n+1}-\sqrt{2n+1-x^2}\right)}.
$$
Also, $\ffi_n$ has a geometric interpretation: it this the area of the intersection of a disc of radius $\sqrt{2n+1}$
centered at $0$ with the strip $[0,x]\times\R^+$. In particular, when $x\to\sqrt{2n+1}$, $\ffi_n(x)\sim \frac{\pi}{4}(2n+1)$.
\end{remark}

\begin{proof} We will fix $n$ and use the same notation as previously, e.g.
$\lambda=\sqrt{2n+1}$, $p(x)=\sqrt{\lambda^2-x^2}$,...

Let us first establish the bounds on $e$. Note that
\begin{eqnarray*}
e(x)&=&\int_0^x\lambda-\sqrt{\lambda^2-t^2}\,\mbox{d}t
=\lambda\int_0^x1-\sqrt{1-(t/\lambda)^2}\,\mbox{d}t\\
&=&\lambda^2\int_0^{x/\lambda}1-\sqrt{1-s^2}\,\mbox{d}s=\lambda^2\int_0^{x/\lambda}\frac{s^2}{1+\sqrt{1-s^2}}\,\mbox{d}s.
\end{eqnarray*}
But, $\dst \abs{\int_a^b\frac{s^2}{1+\sqrt{1-s^2}}\,\mbox{d}s}\leq\abs{\int_a^bs^2\,\mbox{d}s}=\frac{|b^3-a^3|}{3}$
the estimate of $e(x)$ and $e(x)-e(y)$ follow immediately.

\medskip

Consider
$$
E(x)=\frac{1}{\sqrt{p(x)}}\int_0^x\frac{q(t)}{\sqrt{p(t)}}h(t)\sin\bigl(\ffi(x)-\ffi(t)\bigr)\,\mbox{d}t.
$$
Using Cauchy-Schwarz, we obtain
$$
|E(x)|\leq \frac{1}{\sqrt{p(x)}}\left(\int_0^x\frac{q(t)^2}{p(t)}\,\mbox{d}t\right)^{1/2}
\left(\int_0^x h(t)^2\,\mbox{d}t\right)^{1/2}
\leq
\frac{1}{\sqrt{p(x)}}\left(\int_0^x\frac{25\lambda^4}{16p(t)^9}\,\mbox{d}t\right)^{1/2}
$$
since $\norm{h_n}_2=1$.
As $|x|<\lambda$, and $p$ decreases, the estime $|E(x)|\leq \dst\frac{5\lambda^{5/2}}{4p(x)^5}$ follows.

Note that, if $|x|\leq\lambda/2$, then a slightly better estimate holds:
$$
|E(x)|\leq \frac{10}{4\lambda\sqrt{3}}\left(\int_0^{\lambda/2}\frac{\lambda^4}{(\lambda^2-t^2)^{9/2}}\,\mbox{d}t\right)^{1/2}
=\frac{10}{4\sqrt{3}\lambda^{3}}\left(\int_0^{1/2}\frac{1}{(1-s^2)^{9/2}}\,\mbox{d}s\right)^{1/2}.
$$
A numerical computation shows that $|E(x)|\leq \frac{2}{\lambda^3}$.

\begin{remark}
Note that the bound on $E$ allows to obtain a bound on $h_n$. For instance, if $n\geq2$ is even
\begin{eqnarray*}
|h_{2n}(x)|&\leq&\sqrt{\frac{\lambda}{p(x)}}|h_{2n}(0)|+ \frac{5}{4}\left(\frac{\lambda^{1/2}}{p(x)}\right)^5\\
&\leq&\left(\frac{(2n+1)^{1/4}}{\sqrt{\pi}n^{1/4}}+\frac{5}{4}\frac{\lambda^{5/2}}{p(x)^{9/2}}\right)\frac{1}{\sqrt{p(x)}}\\
&\leq&\left(\frac{(2n+1)^{1/4}}{\sqrt{\pi}n^{1/4}}+\frac{5}{4}\frac{1}{\lambda^{2}\eta^{9/4}}\right)\frac{1}{\sqrt{p(x)}}
\leq\frac{1}{\sqrt{p(x)}},
\end{eqnarray*}
provided $|x|\leq(1-\eta)\lambda$ with $\eta\geq\dst\frac{2}{\lambda^{8/9}}$.

The same estimate is valid in the case when $n$ is odd.
\end{remark}

In order to prove the Lipschitz bound on $E$, let us introduce some further notation:
$$
\chi(x,t)=\frac{q(t)}{\sqrt{p(t)}}h(t)\sin\bigl(\ffi(x)-\ffi(t)\bigr)
$$
and
$$
\Phi(x,y)=\int_0^x\chi(y,t)\,\mbox{d}t.
$$
Thus, we have proved that for $|x|<\lambda$,
$$
|\Phi(x,x)|=|E(x)|\leq\begin{cases}\frac{5}{4}\left(\frac{\lambda^{1/2}}{p(x)}\right)^5&\mbox{if }|x|<\lambda\\
\frac{2}{\lambda^3}&\mbox{if }|x|<\lambda/2\end{cases}.
$$
Now, if $x\leq y<\lambda$,
\begin{eqnarray*}
E(y)-E(x)&=&\left(\frac{1}{\sqrt{p(y)}}-\frac{1}{\sqrt{p(x)}}\right)\Phi(y,y)\\
&&+\frac{1}{\sqrt{p(x)}}\bigl[\Phi(y,y)-\Phi(x,y)\bigr]\\
&&\frac{1}{\sqrt{p(x)}}\bigl[ \Phi(x,y)-\Phi(x,x)\bigr]\\
&=&E_1+E_2+E_3.
\end{eqnarray*}
Note, that
\begin{eqnarray*}
\abs{\frac{1}{\sqrt{p(y)}}-\frac{1}{\sqrt{p(x)}}}&\leq&\frac{1}{2}|x-y|\sup_{t\in[x,y]}\abs{\frac{p'(t)}{p(t)^{3/2}}}\\
&=&\frac{1}{2}|x-y|\sup_{t\in[x,y]}\abs{\frac{t}{p(t)^{5/2}}}\leq\frac{\lambda}{2p(y)^{5/2}}|x-y|.
\end{eqnarray*}
Thus, we obtain that $|E_1|\leq\dst\frac{5\lambda^{7/2}}{8p(y)^{15/2}}|x-y|$.

In the case when $|x|,|y|<\lambda/2$, the same reasoning leads to the estimate $|E_1|\leq\dst\frac{|x-y|}{\lambda^{9/2}}$.

\smallskip

Next, if $|x|,|y|\leq(1-\eta)\lambda$ one can estimate $E_2$ as follows:
\begin{eqnarray*}
|\Phi(y,y)-\Phi(x,y)|&\leq&\int_x^y|\chi(y,t)|\,\mbox{d}t
\leq|x-y|\sup_{t\in[x,y]}\frac{q(t)}{\sqrt{p(t)}}\sup_{|t|\leq|y|\lambda}|h(t)|\\
&\leq& \frac{5\lambda^2}{4p(y)^5}|x-y|.
\end{eqnarray*}
Therefore, $|E_2|\leq \dst\frac{5\lambda^2}{4p(y)^{11/2}}|x-y|$.

In general, we will bootstrap the approximation of $h$. Let us first assume that $n$ is even, so that
$$
h(t)=h_n(0)\sqrt{\frac{\lambda}{p(x)}}\cos\ffi(x)+E(x)
$$
Then
\begin{eqnarray*}
\chi(x,t)&=&\frac{q(t)}{\sqrt{p(t)}}h(t)\sin\bigl(\ffi(x)-\ffi(t)\bigr)\\
&=&h(0)\sqrt{\lambda}\frac{q(t)}{p(t)}\cos\ffi(x)\sin\bigl(\ffi(x)-\ffi(t)\bigr)
+\frac{q(t)}{\sqrt{p(t)}}E(t)\sin\bigl(\ffi(x)-\ffi(t)\bigr)\\
&=&\chi_1(x,t)+\chi_2(x,t).
\end{eqnarray*}
Therefore, we may write $|E_2|\leq E_2^1+E_2^2$ where $\dst E_2^j=\frac{1}{\sqrt{p(x)}}\int_x^y|\chi_j(y,t)|\,\mbox{d}t$.

For $E_2^2$ we use the estimate $\dst E(t)\leq \frac{5}{4}\left(\frac{\lambda^{1/2}}{p(t)}\right)^5$ that we established above.
It follows that
$$
E_2^2\leq \frac{1}{\sqrt{p(x)}}\int_x^y\frac{5\lambda^2}{4p^{9/2}(t)}
\frac{5}{4}\left(\frac{\lambda^{1/2}}{p(t)}\right)^5\,\mbox{d}t
\leq \frac{25\lambda^{9/2}}{16p(y)^{10}}|x-y|.
$$
If $|x|,|y|\leq\lambda/2$, we may use $|E(t)|\leq2\lambda^{-3}$, $q(t)\leq 5\lambda^{-2}$, $p(x)\geq \sqrt{3}\lambda/2$
to obtain $ E_2^2\leq\dst\frac{12}{\lambda^6} |x-y|$.

On the other hand,
$$
E_2^1\leq\frac{|h(0)|\sqrt{\lambda}}{\sqrt{p(x)}}\int_x^y\abs{\frac{q(t)}{p(t)}}\,\mbox{d}t
\leq\frac{5\times 2^{1/4}\lambda^{5/2}}{4\sqrt{\pi} n^{1/4}p(y)^{11/2}}|x-y|.
$$

If $n$ is odd, $h(0)=0$ while $|h'(0)|\leq \frac{2^{1/4}\lambda}{\sqrt{\pi}n^{1/4}}$ and we have to replace $\chi_1$
by
$$
\chi_1(x,t)=h'(0)\frac{q(t)}{\sqrt{\lambda}p(t)}\sin\ffi(x)\sin\bigl(\ffi(x)-\ffi(t)\bigr),
$$
from which we deduce that
$$
E_2^1\leq\frac{|h'(0)|}{\sqrt{\lambda}\sqrt{p(x)}}\int_x^y\abs{\frac{q(t)}{p(t)}}\,\mbox{d}t
\leq\frac{5\times 2^{1/4}\lambda^{5/2}}{4\sqrt{\pi} n^{1/4}p(y)^{11/2}}|x-y|\leq\frac{\lambda^{5/2}}{n^{1/4}p(y)^{11/2}}|x-y|.
$$
If $|x|,|y|\leq\lambda/2$, there is again a slight improvement:
$$
E_2^1\leq\frac{10\time 2^{7/4}}{\sqrt{\pi}n^{1/4}3^{3/4}\lambda^{13/2}}|x-y|\leq \frac{11}{\lambda^7}|x-y|,
$$
since $n\geq 3^{-1/4}\lambda^{1/2}$ if $n\geq 1$.
\smallskip

Finally,
\begin{eqnarray*}
\Phi(x,y)-\Phi(x,x)&=&\int_0^x\frac{q(t)}{\sqrt{p(t)}}h(t)\Bigl[\sin\bigl(\ffi(y)-\ffi(t)\bigr)
-\sin\bigl(\ffi(x)-\ffi(t)\bigr)\Bigr]\,\mbox{d}t\\
&=&2\int_0^x\frac{q(t)}{\sqrt{p(t)}}h(t)\cos\frac{\ffi(x)+\ffi(y)-2\ffi(t)}{2}\,\mbox{d}t
\sin\frac{\ffi(y)-\ffi(x)}{2}.
\end{eqnarray*}
The integral is estimated in the same way as we estimated $\Phi(x,x)$, while for $\ffi$
we use the mean value theorem and the fact that $\ffi'=p$. We, thus, get
$$
|E_3|\leq\frac{5\lambda^{5/2}p(x)}{4p(y)^5}|x-y|\leq \frac{5\lambda^{7/2}}{4p(y)^5}|x-y|.
$$
If $|x|,|y|\leq\lambda/2$, there is again a slight improvement: $|E_3|\leq \dst\frac{2}{\lambda^3}p(x)|x-y|\leq\frac{2}{\lambda^{5/2}}|x-y|$.

Summarizing,
\begin{eqnarray*}
|E(x)-E(y)|&\leq& \ent{\frac{p(y)^{5/2}}{2\lambda}+\frac{5}{4}+
\frac{4p(y)^{9/2}}{5 n^{1/4}\lambda^2 }+ \frac{p(y)^5}{\lambda}}\frac{5\lambda^{9/2}}{4p(y)^{10}}|x-y|\\
&\leq&\ent{\frac{p(y)^{3/2}}{2}+\frac{5}{4}+p(y)^2+ p(y)^4}\frac{5\lambda^{9/2}}{4p(y)^{10}}|x-y|,
\end{eqnarray*}
since $p(y)\leq\lambda$ and $n^{-1/4}\geq\frac{5}{4}p(y)^{1/2}$.
Now, assume that $|x|,|y|\leq(1-\eta)\lambda$, with $\lambda^{-2a}<\eta<1$, $a>0$. In particular,
$\lambda\geq p(y)\geq \lambda\sqrt{\eta}\geq\lambda^{1-a}$.
Thus,
$$
|E(x)-E(y)|\leq5\frac{\lambda^{4+9/2}}{\lambda^{10(1-a)}}|x-y|=\frac{5}{\lambda^{3/2-10a}}|x-y|.
$$
If $|x|,|y|\leq\lambda/2$, then
$$
|E(x)-E(y)|\leq\ent{\frac{1}{\lambda^{9/2}}+\frac{12}{\lambda^6}+\frac{11}{\lambda^7}+\frac{2}{\lambda^{5/2}}}
\leq \frac{8}{\lambda^{5/2}}|x-y|,
$$
since $\lambda\geq\sqrt{3}$.

\end{proof}

\subsection{Two technical lemmas}

We will now prove two technical lemmas. The first one concerns the function $\ffi_n$:

\begin{lemma}
\label{lem:simpasympt}
 If $|x|,|y|\leq T\leq\frac{1}{2}\sqrt{2n+1}$, then
\begin{equation}
\label{eq:lipffi0}
|\ffi_{n+1}(x)-\ffi_n(x)|\leq\frac{3T}{\sqrt{2n+1}},
\end{equation}
\begin{equation}
\label{eq:lipffi}
|\ffi_{n+1}(x)-\ffi_{n+1}(y)-\ffi_n(x)+\ffi_n(y)|\leq\frac{3}{\sqrt{2n+1}}|x-y|,
\end{equation}
\begin{equation}
\label{eq:lipffi2}
|\ffi_{n+1}(x)-\ffi_{n}(x)+\ffi_{n+1}(y)-\ffi_{n}(y)|\leq\frac{5T}{\sqrt{2n+1}},
\end{equation}
\begin{equation}
\label{eq:ffi}
\ffi_{n+1}(x)+\ffi_n(x)-\ffi_{n+1}(y)-\ffi_n(y)=(\sqrt{2n+1}+\sqrt{2n+3})(x-y)+\eps_n(x,y),
\end{equation}
with $\dst|\eps_n(x,y)|\leq\frac{T^2}{\sqrt{2n+1}}|x-y|$ and
\begin{equation}
\label{eq:lipffixxx}
|\ffi_{n}(x)-\ffi_{n}(y)|\leq\frac{5}{4}\sqrt{2n+1}|x-y|.
\end{equation}
\end{lemma}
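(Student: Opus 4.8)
\medskip

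The plan is to derive all five bounds from the single representation
\[
\ffi_m(b)-\ffi_m(a)=\int_a^b\sqrt{2m+1-t^2}\,\d t\qquad(|a|,|b|\le\sqrt{2m+1}),
\]
applied with $m=n$ and $m=n+1$, together with one elementary fact: if $|t|\le T\le\tfrac12\sqrt{2n+1}$ then $t^2\le\tfrac14(2n+1)$, so
\[
\tfrac{\sqrt3}{2}\sqrt{2n+1}\le\sqrt{2n+1-t^2}\le\sqrt{2n+1},
\]
and the same two-sided bound holds a fortiori at level $n+1$. I shall also use the identity $\sqrt a-\sqrt b=(a-b)/(\sqrt a+\sqrt b)$ repeatedly.

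For \eqref{eq:lipffi0}, \eqref{eq:lipffi} and \eqref{eq:lipffi2} I would start from $(2n+3-t^2)-(2n+1-t^2)=2$, which gives
\[
0\le\sqrt{2n+3-t^2}-\sqrt{2n+1-t^2}=\frac{2}{\sqrt{2n+3-t^2}+\sqrt{2n+1-t^2}}\le\frac{1}{\sqrt{2n+1-t^2}}\le\frac{2}{\sqrt3\,\sqrt{2n+1}}.
\]
Integrating over $[0,x]$ yields $|\ffi_{n+1}(x)-\ffi_n(x)|\le\frac{2|x|}{\sqrt3\sqrt{2n+1}}\le\frac{2T}{\sqrt3\sqrt{2n+1}}$, which is even stronger than \eqref{eq:lipffi0}; integrating over the interval with endpoints $x$ and $y$ gives \eqref{eq:lipffi} with constant $\tfrac{2}{\sqrt3}<3$; and adding the first bound at $x$ and at $y$ gives \eqref{eq:lipffi2} with constant $\tfrac{4}{\sqrt3}<5$.

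For \eqref{eq:lipffixxx} the direct estimate $|\ffi_n(x)-\ffi_n(y)|\le|x-y|\sup_{|t|\le T}\sqrt{2n+1-t^2}\le\sqrt{2n+1}\,|x-y|$ already suffices; one also recovers exactly the stated constant by writing $\ffi_n(x)=\sqrt{2n+1}\,x-e_n(x)$ as in Theorem~\ref{th:approxherm} and using $|e_n(x)-e_n(y)|\le\frac{T^2}{\sqrt{2n+1}}|x-y|\le\frac14\sqrt{2n+1}\,|x-y|$. Finally, for \eqref{eq:ffi} I would substitute the expansions $\ffi_{n+1}(x)=\sqrt{2n+3}\,x-e_{n+1}(x)$ and $\ffi_n(x)=\sqrt{2n+1}\,x-e_n(x)$ from Theorem~\ref{th:approxherm}; the linear terms assemble exactly into $(\sqrt{2n+1}+\sqrt{2n+3})(x-y)$, leaving
\[
\eps_n(x,y)=-\bigl(e_{n+1}(x)-e_{n+1}(y)\bigr)-\bigl(e_n(x)-e_n(y)\bigr),
\]
and since $T\le\tfrac12\sqrt{2n+1}\le\tfrac12\sqrt{2n+3}$ the Lipschitz part of Theorem~\ref{th:approxherm} applies at both levels $n$ and $n+1$ and controls $\eps_n$ by a multiple of $\frac{T^2}{\sqrt{2n+1}}|x-y|$. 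Alternatively one keeps $\eps_n(x,y)=\int_y^x\bigl[(\sqrt{2n+1-t^2}-\sqrt{2n+1})+(\sqrt{2n+3-t^2}-\sqrt{2n+3})\bigr]\,\d t$ and bounds each integrand term by $\frac{t^2}{2\sqrt{2n+1-t^2}}\le\frac{t^2}{\sqrt3\sqrt{2n+1}}$.

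There is no conceptual obstacle here; the only real content is tracking constants, and the one place where this is genuinely tight is \eqref{eq:ffi}. One must make sure that Theorem~\ref{th:approxherm} is legitimately invoked at level $n+1$ — it is, because $T\le\tfrac12\sqrt{2n+1}<\tfrac12\sqrt{2n+3}$ — and the bound on $\eps_n$ comes out as the sum of the level-$n$ and level-$(n+1)$ Lipschitz constants for $e_m$, so it is exactly there that the numerical factor should be double-checked.
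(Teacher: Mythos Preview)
Your approach is essentially the paper's own: both proofs rest on the integral representation $\ffi_m(b)-\ffi_m(a)=\int_a^b\sqrt{2m+1-t^2}\,\d t$ together with the identity $\sqrt a-\sqrt b=(a-b)/(\sqrt a+\sqrt b)$, and the paper likewise deduces \eqref{eq:lipffi0} from \eqref{eq:lipffi} at $y=0$, \eqref{eq:lipffi2} by the triangle inequality on the two pieces, and \eqref{eq:ffi} from the direct integral expression for $\eps_n$. Your direct pointwise bound on the integrand in fact yields sharper constants for \eqref{eq:lipffi0}--\eqref{eq:lipffi2} than the paper's detour through $\arcsin$, and your one-line proof of \eqref{eq:lipffixxx} with constant $1$ is cleaner than the paper's $5/4$.

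The only spot where the routes differ slightly is \eqref{eq:ffi}, which you correctly flag. Going through the Lipschitz bound on $e_m$ from Theorem~\ref{th:approxherm} costs you a factor $2$; your alternative direct integral is exactly what the paper does, except that the paper keeps the full denominator $\sqrt{2m+1-t^2}+\sqrt{2m+1}\ge(1+\tfrac{\sqrt3}{2})\sqrt{2m+1}$ rather than your $2\sqrt{2m+1-t^2}\ge\sqrt3\,\sqrt{2m+1}$, which tightens your $\tfrac{2}{\sqrt3}\approx1.15$ to $\tfrac{2}{1+\sqrt3/2}\approx1.07$. (Even so, the paper's subsequent claim that each half is at most $\tfrac{T^2}{2\sqrt{2n+1}}|x-y|$ overshoots, since $1+\tfrac{\sqrt3}{2}<2$; the stated constant $1$ in \eqref{eq:ffi} is thus marginally optimistic in both treatments.)
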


\begin{proof} Note that \eqref{eq:lipffi0} is a direct consequence of \eqref{eq:lipffi} with $y=0$.

Recall that
$\dst\ffi_n(x)=\int_0^x\sqrt{2n+1-t^2}\,\mbox{d}t$. We have
\begin{eqnarray*}
|\ffi_{n+1}(x)-\ffi_n(x)-\ffi_{n+1}(y)+\ffi_n(y)|
&=&\abs{\int_y^x\sqrt{2n+3-t^2}-\sqrt{2n+1-t^2}\,\mbox{d}t}\\
&=&\abs{\int_y^x\frac{2}{\sqrt{2n+3-t^2}+\sqrt{2n+1-t^2}}\,\mbox{d}t}\\
&\leq&\abs{\int_y^x\frac{2}{\sqrt{2n+1-t^2}}\,\mbox{d}t}\\
&=&2\abs{\arcsin\frac{x}{\sqrt{2n+1}}-\arcsin\frac{y}{\sqrt{2n+1}}}.
\end{eqnarray*}
But, $\arcsin$ is $\frac{1}{\sqrt{\eta}}$-Lipschitz on $[-(1-\eta),(1-\eta)]$, thus,
$$
|\ffi_{n+1}(x)+\ffi_n(x)-\ffi_{n+1}(y)-\ffi_n(y)|\leq 2\sqrt{2}\frac{|x-y|}{\sqrt{2n+1}}.
$$

Next,
\begin{eqnarray*}
|\ffi_{n+1}(x)-\ffi_{n}(x)+\ffi_{n}(y)-\ffi_{n+1}(y)|
&\leq&\abs{\int_0^x\sqrt{2n+3-t^2}-\sqrt{2n+1-t^2}\,\mbox{d}t}\\
&&+\abs{\int_0^y\sqrt{2n+3-t^2}-\sqrt{2n+1-t^2}\,\mbox{d}t}\\
&\leq&2\int_0^T\frac{2}{\sqrt{2n+1-t^2}}\,\mbox{d}t\leq \frac{8}{\sqrt{3}}\frac{T}{\sqrt{2n+1}}.
\end{eqnarray*}

Set $N=\sqrt{2n+1}+\sqrt{2n+3}$. Then, $\ffi_{n+1}(x)+\ffi_n(x)-\ffi_{n+1}(y)-\ffi_n(y)$ is
\begin{eqnarray*}
&=&\int_y^x\sqrt{2n+3-t^2}+\sqrt{2n+1-t^2}\,\mbox{d}t\\
&=&N(x-y)+\int_y^x\sqrt{2n+3-t^2}+\sqrt{2n+1-t^2}-N\,\mbox{d}t.
\end{eqnarray*}
Therefore,
$$
\eps(x,y)=\int_y^x\sqrt{2n+3-t^2}-\sqrt{2n+3}\,\mbox{d}t+\int_y^x\sqrt{2n+1-t^2}-\sqrt{2n+1}\,\mbox{d}t.
$$
Let us estimate the second integral, the first being estimated in the same way:
\begin{eqnarray*}
\abs{\int_y^x\sqrt{2n+1-t^2}-\sqrt{2n+1}\,\mbox{d}t}&=&\abs{\int_y^x\frac{t^2}{\sqrt{2n+1-t^2}+\sqrt{2n+1}}\,\mbox{d}t}\\
&\leq&\frac{|x^3-y^3|}{3(1+\sqrt{3}/2)\sqrt{2n+1}}\leq\frac{T^2}{2\sqrt{2n+1}}|x-y|,
\end{eqnarray*}
since $\sqrt{2n+1-t^2}\geq \sqrt{3}/2$, when $|t|\leq T\leq \sqrt{2n+1}/2$.

Finally, \eqref{eq:ffi} implies \eqref{eq:lipffixxx}:
\begin{eqnarray*}
|\ffi_{n}(x)-\ffi_{n}(y)|&\leq &\sqrt{2n+1}|x-y|+|e_{n}(x)-e_{n}(y)|\leq\left(\sqrt{2n+1}+\frac{T^2}{2n+1}\right)|x-y|\\
&\leq&\frac{5}{4}\sqrt{2n+1}|x-y|,
\end{eqnarray*}
since $T\leq\sqrt{2n+1}/2$.
\end{proof}

\begin{remark} Geometrically, $|\ffi_{n+1}(x)-\ffi_n(x)-\ffi_{n+1}(y)+\ffi_n(y)|$
is the area of the intersection of the annulus of inner radius $\sqrt{2n+1}$ an outer radius $\sqrt{2n+3}$
with a vertical strip with first coordinate in $[x,y]$. The annulus has width $o(n^{-1/2})$
so that its intersection with the strip has area $o(n^{-1/2}|x-y|)$ as long as this strip
is not ``tangent'' to the annulus. The lemma is a quantitative statement of this simple
geometric fact.
\end{remark}

The next result is a simplification of Theorem \ref{th:approxherm}:

\begin{corollary}
\label{cor:simpasympt}
Let $T\geq 2$ and let $n\geq 2T^2$. Then, for $|x|\leq T$, we obtain that

-- if $n$ is even, $n=2p$
\begin{equation}
\label{eq:approxhermpair}
h_{2p}(x)=\frac{(-1)^p}{\sqrt{\pi}p^{1/4}}\cos\ffi_{2p}(x)+\tilde E_{2p}(x);
\end{equation}

-- if $n$ is odd, $n=2p+1$
\begin{equation}
\label{eq:approxhermimpair}
h_{2p+1}(x)=
\frac{(-1)^p}{\sqrt{\pi}p^{1/4}}\sin\ffi_{2p+1}(x)+\tilde E_{2p+1}(x),
\end{equation}
where, for $|x|,|y|\leq T$,
\begin{equation}
\label{eq:esttildeE}
|\tilde E_n(x)|\leq\frac{2T^2}{(2n+1)^{5/4}}
\quad\mbox{and}\quad
|\tilde E_n(x)-\tilde E_n(y)|\leq 3\frac{T^2}{(2n+1)^{3/4}}|x-y|
\end{equation}
\end{corollary}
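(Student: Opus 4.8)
The plan is to specialize Theorem \ref{th:approxherm} to the regime $|x|\le T\le\sqrt{2n+1}/2$ (valid since $n\ge 2T^2$ forces $2n+1>4T^2$, hence $T<\sqrt{2n+1}/2$) and then replace each slowly varying prefactor in \eqref{eq:approxherm} by its value at $0$, collecting the discrepancies together with $E_n$ into $\tilde E_n$.

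Take $n=2p$ even, so that $h_n'(0)=0$ and \eqref{eq:approxherm} reads $h_{2p}(x)=h_{2p}(0)\,\beta(x)\cos\ffi_n(x)+E_n(x)$, where $\beta(x):=\bigl(1-\frac{x^2}{2n+1}\bigr)^{-1/4}$. Using property (iii), $h_{2p}(0)=\frac{(-1)^p}{\sqrt\pi p^{1/4}}\bigl(1-\frac{\eta_{2p}}{8p}\bigr)$ with $0<\eta_{2p}<1$, I would define $\tilde E_{2p}(x):=h_{2p}(x)-\frac{(-1)^p}{\sqrt\pi p^{1/4}}\cos\ffi_n(x)=A(x)\cos\ffi_n(x)+E_n(x)$ with $A(x):=\frac{(-1)^p}{\sqrt\pi p^{1/4}}\bigl[\bigl(1-\frac{\eta_{2p}}{8p}\bigr)\beta(x)-1\bigr]$. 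Since $|x|\le T\le\sqrt{2n+1}/2$ one has $x^2/(2n+1)\le 1/4$, so $1\le\beta(x)\le(3/4)^{-1/4}$, and by the mean value theorem applied to $u\mapsto(1-u)^{-1/4}$ on $[0,1/4]$, $0\le\beta(x)-1\le c_0\frac{x^2}{2n+1}$ and $|\beta'(x)|\le c_1\frac{|x|}{2n+1}$ for explicit constants $c_0,c_1$. Feeding these in, together with $|\eta_{2p}|<1$, the fact that $p^{-1/4}$ is comparable to $(2n+1)^{-1/4}$ up to an absolute constant, and $T\ge 2$, one gets $|A(x)|\le C T^2(2n+1)^{-5/4}$ and $|A(x)-A(y)|\le C T(2n+1)^{-5/4}|x-y|$ with a small explicit $C$. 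Adding the pointwise bound $|E_n(x)|\le 2(2n+1)^{-3/2}$ from \eqref{eq:estEn} and using $(2n+1)^{1/4}\ge 2$, $T^2\ge 4$ gives $|\tilde E_{2p}(x)|\le 2T^2(2n+1)^{-5/4}$.

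For the Lipschitz estimate I would write $\tilde E_{2p}(x)-\tilde E_{2p}(y)=[A(x)-A(y)]\cos\ffi_n(x)+A(y)[\cos\ffi_n(x)-\cos\ffi_n(y)]+[E_n(x)-E_n(y)]$ and bound the three pieces by, respectively, the Lipschitz bound on $A$ above; $|A(y)|\le CT^2(2n+1)^{-5/4}$ combined with $|\cos\ffi_n(x)-\cos\ffi_n(y)|\le|\ffi_n(x)-\ffi_n(y)|\le\frac54\sqrt{2n+1}\,|x-y|$ from \eqref{eq:lipffixxx}; and $|E_n(x)-E_n(y)|\le 8(2n+1)^{-5/4}|x-y|$ from \eqref{eq:estEn}. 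The middle piece dominates, of size $\lesssim T^2(2n+1)^{-3/4}|x-y|$, and $2n+1>4T^2$ together with $T\ge 2$ lets the other two be absorbed, giving $|\tilde E_{2p}(x)-\tilde E_{2p}(y)|\le 3T^2(2n+1)^{-3/4}|x-y|$.

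The odd case $n=2p+1$ runs identically, using $h_n(0)=0$, the second summand of \eqref{eq:approxherm}, and property (iv): here $2n+1=4p+3$, so $\frac{h_n'(0)}{\sqrt{2n+1}}=\frac{(-1)^p}{\sqrt\pi p^{1/4}}\bigl(1-\frac{\eta_{2p+1}}{4p}\bigr)$ with $|\eta_{2p+1}|<1$, whence $h_{2p+1}(x)=\frac{(-1)^p}{\sqrt\pi p^{1/4}}\bigl(1-\frac{\eta_{2p+1}}{4p}\bigr)\beta(x)\sin\ffi_n(x)+E_n(x)$ with the same $\beta$, and the analogous $\tilde E_{2p+1}$ obeys the same bounds (replacing $1/8p$ by $1/4p$ and $4p=2n$ by $4p=2n+1-3$ only shifts numerical constants). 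I expect the only real work to be tracking the numerical constants so they land under $2T^2(2n+1)^{-5/4}$ and $3T^2(2n+1)^{-3/4}$; the one slightly delicate spot is the parity-dependent conversion between powers of $p$ and of $2n+1$, which one must handle uniformly for both parities.
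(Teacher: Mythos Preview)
Your proposal is correct and follows essentially the same route as the paper. The paper also splits $\tilde E_{2p}$ into a term of the form $A(x)\cos\ffi_{2p}(x)$ plus $E_{2p}(x)$ (further splitting $A$ into the $\beta(x)-1$ piece and the $\eta_{2p}/(8p)$ piece), bounds $\beta(x)-1$ via $(1+a)^{1/4}-1\le a/4$, and for the Lipschitz estimate uses precisely your three-term decomposition together with \eqref{eq:lipffixxx} and \eqref{eq:estEn}; the odd case is handled identically.
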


\begin{proof}
First, we consider the case when $n$ is even, $n=2p$. Then, $h_{2p}(0)=\frac{(-1)^p}{\sqrt{\pi}p^{1/4}}\left(1-\frac{\eta_{2p}}{8p}\right)$ and $h_{2p+1}^\prime(0)=0$. Therefore,
\eqref{eq:approxherm} reads
\begin{eqnarray*}
h_{2p}(x)&=&\frac{(-1)^p}{\sqrt{\pi}p^{1/4}}\left(1-\frac{\eta_{2p}}{8p}\right)\left(\frac{4p+1}{4p+1-x^2}\right)^{1/4}\cos\ffi_{2p}(x)
+\tilde E_{2p}(x)\\
&=&\frac{(-1)^p}{\sqrt{\pi}p^{1/4}}\cos\ffi_{2p}(x)+\tilde E_{2p}(x),
\end{eqnarray*}
where $\tilde E_{2p}(x)$ is
\begin{eqnarray*}
&=&\frac{(-1)^p}{\sqrt{\pi}p^{1/4}}\left(1-\frac{\eta_{2p}}{8p}\right)\ent{\left(\frac{4p+1}{4p+1-x^2}\right)^{1/4}-1}
\cos\ffi_{2p}(x)+\frac{(-1)^p}{\sqrt{\pi}}\frac{\eta_{2p}}{8p^{5/4}}\cos\ffi_{2p}(x)\\
&&+E_{2p}(x).
\end{eqnarray*}
But, $(1+a)^{1/4}-1\leq\frac{a}{4}$, which
for $0\leq a:=\frac{x^2}{4p+1-x^2}\leq \frac{T^2}{4p+1-T^2}\leq\frac{4}{3}\frac{T^2}{4p+1}$
gives
\begin{equation}
\label{eq:aaaa}
\abs{\left(\frac{4p+1}{4p+1-x^2}\right)^{1/4}-1}\leq\frac{1}{3}\frac{T^2}{4p+1}.
\end{equation}
It follows that
\begin{eqnarray*}
|\tilde E_{2p}(x)|&\leq& \frac{1}{\sqrt{\pi}p^{1/4}}\abs{\left(1+\frac{x^2}{4p+1-x^2}\right)^{1/4}-1}
+\frac{1}{8\sqrt{\pi}p^{5/4}}+|E_{2p}(x)|\\
&\leq&\frac{1}{\sqrt{\pi}p^{1/4}}\frac{1}{3}\frac{T^2}{4p+1}+\frac{1}{8\sqrt{\pi}p^{5/4}}+\frac{2}{(4p+1)^{3/2}}
\leq\frac{2T^2}{(4p+1)^{5/4}}.
\end{eqnarray*}

Further,
\begin{eqnarray*}
|\tilde E_{2p}(x)-\tilde E_{2p}(y)|&\leq&
\frac{1}{\sqrt{\pi}p^{1/4}}\abs{\left(1-\frac{x^2}{4p+1}\right)^{-1/4}-\left(1-\frac{y^2}{4p+1}\right)^{-1/4}}\\
&&+\frac{1}{\sqrt{\pi}p^{1/4}}\ent{\abs{\left(\frac{4p+1}{4p+1-y^2}\right)^{1/4}-1}+\frac{1}{8p}}\abs{\cos\ffi_{2p}(x)-\cos\ffi_{2p}(y)}\\
&&+|E_{2p}(x)-E_{2p}(y)|=E_{2p}^1(x,y)+E_{2p}^2(x,y)+E_{2p}^3(x,y).
\end{eqnarray*}
We have already established that $E_{2p}^3(x,y)\leq\dst\frac{8}{(4p+1)^{5/4}}|x-y|$.
Further, if $0\leq X,Y\leq \frac{T^2}{4p+1}\leq\frac{1}{4}$, then
$$
|(1-X)^{-1/4}-(1-Y)^{-1/4}|\leq\frac{5}{4}|X-Y|
\sup_{0\leq t\leq 1/4}(1-t)^{-5/4}=\frac{5\sqrt{2}}{3^{1/4}}|X-Y|.
$$
Therefore
$$
E_{2p}^1(x,y)\leq \frac{1}{\sqrt{\pi}p^{1/4}}\frac{5\sqrt{2}}{3^{1/4}}\frac{|x^2-y^2|}{4p+1}
\leq 4\frac{T}{4p+1}|x-y|.
$$
Finally, for $E_{2p}^2$ we use the fact that $\cos$ is $1$-Lipschitz, \eqref{eq:lipffixxx}
and \eqref{eq:aaaa}, to obtain
$$
E_{2p}^2(x,y)\leq \frac{1}{\sqrt{\pi}p^{1/4}}\ent{\frac{1}{3}\frac{T^2}{4p+1}+\frac{1}{8p}}\frac{5}{4}\sqrt{4p+1}|x-y|
\leq 2\frac{T^2}{(4p+1)^{3/4}}|x-y|.
$$
Thus,
$$
|\tilde E_{2p}(x)-\tilde E_{2p}(y)|\leq \left(4\frac{T}{4p+1}+2\frac{T^2}{(4p+1)^{3/4}}+\frac{8}{(4p+1)^{5/4}}\right)|x-y|
\leq 3\frac{T^2}{(4p+1)^{3/4}}|x-y|.
$$

\medskip

Let us now consider the case when $n$ is odd, $n=2p+1$. Then, $h_{2p+1}(0)=0$ and
$h_{2p+1}^\prime(0)=\frac{(-1)^p\sqrt{4p+3}}{\sqrt{\pi}p^{1/4}}\left(1-\frac{\eta_{2p+1}}{4}\right)$.
Therefore \eqref{eq:approxherm} reads
\begin{eqnarray*}
h_{2p+1}(x)&=&\frac{(-1)^p\sqrt{4p+3}}{\sqrt{\pi}p^{1/4}}\left(1-\frac{\eta_{2p+1}}{4}\right)
\frac{\sin\ffi_{2p+1}(x)}{\bigl((4p+3)(4p+3-x^2)\bigr)^{1/4}}+E_{2p+1}(x)\\
&=&\frac{(-1)^p}{\sqrt{\pi}p^{1/4}}\left(1-\frac{\eta_{2p+1}}{4}\right)
\left(\frac{4p+3}{(4p+3-x^2)}\right)^{1/4}\sin\ffi_{2p+1}(x)+E_{2p+1}(x)\\
&=&\frac{(-1)^p}{\sqrt{\pi}p^{1/4}}\sin\ffi_{2p+1}(x)+\tilde E_{2p+1}(x),
\end{eqnarray*}
where
\begin{eqnarray*}
\tilde E_{2p+1}(x)&=&\frac{(-1)^p}{\sqrt{\pi}p^{1/4}}\left(1-\frac{\eta_{2p+1}}{4}\right)\ent{\left(\frac{4p+3}{(4p+3-x^2)}\right)^{1/4}-1}\sin\ffi_{2p+1}(x)\\
&&+\frac{(-1)^p\eta_{2p+1}}{4\sqrt{\pi}p^{1/4}}\sin\ffi_{2p+1}(x)+E_{2p+1}(x).
\end{eqnarray*}
The remaining of the proof is the same as for $\tilde E_{2p}$.
\end{proof}

\begin{remark}
The assumption $T\geq 2$ is here to make it easier to group terms in the estimates of the errors.
For $T\geq 1$ the constants are slightly worse. The reader may check that
\begin{equation}
\label{eq:esttildeET1}
|\tilde E_n(x)|\leq\frac{3T^2}{(2n+1)^{5/4}}
\quad\mbox{and}\quad
|\tilde E_n(x)-\tilde E_n(y)|\leq 8\frac{T^2}{(2n+1)^{3/4}}|x-y|.
\end{equation}
\end{remark}

\section{$L^2$-Approximation of functions by Hermite functions}

\subsection{The kernel of the projection onto the Hermite functions}\label{sec:kernel}

As $(h_n)_{n\geq 0}$ forms an orthonormal basis of $L^2(\R)$, every $f\in L^2(\R)$ can be written as
$$
f(x)=\lim_{n\to+\infty}\sum_{k=0}^n \scal{f,h_k}h_k(x),
$$
where the limit is in the $L^2(\R)$ sense. Further,
$$
\sum_{k=0}^n \scal{f,h_k}h_k(x)= \sum_{k=0}^n\int_{\R}f(y)h_k(y)\,\mbox{d}y\,h_k(x)
=\int_{\R}f(y)\sum_{k=0}^nh_k(x)h_k(y)\,\mbox{d}y=\int_{\R} k_n(x,y)f(y)\,\mbox{d}y,
$$
with the kernel
$\dst
k_n(x,y)=\sum_{k=0}^nh_k(x)h_k(y).
$
According to the Christoffel-Darboux Formula,
$$
k_n(x,y)=\sqrt{\frac{n+1}{2}}\frac{h_{n+1}(x)h_n(y)-h_{n+1}(y)h_n(x)}{x-y}.
$$
We will now use Corollary \ref{cor:simpasympt} to approximate this kernel:

\begin{theorem}
\label{th:estkn}
Let $T\geq 2$, $n\geq 2T^2$ and $N=\frac{\sqrt{2n+1}+\sqrt{2n+3}}{2}$.
Then, for  $|x|,|y|\leq T$,
$$
k_n(x,y)=\frac{1}{\pi}\frac{\sin N(x-y)}{x-y}+R_n(x,y),
$$
with $|R_n(x,y)|\leq\dst\frac{17T^2}{\sqrt{2n+1}}$.
\end{theorem}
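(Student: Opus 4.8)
The plan is to insert the WKB approximations from Corollary~\ref{cor:simpasympt} into the Christoffel--Darboux formula
$$
k_n(x,y)=\sqrt{\frac{n+1}{2}}\,\frac{h_{n+1}(x)h_n(y)-h_{n+1}(y)h_n(x)}{x-y},
$$
and then collect the errors. Since $h_{n+1}$ and $h_n$ have opposite parities, exactly one of them is approximated by a cosine and the other by a sine; I would treat the case $n=2p$ (so $h_n\approx \frac{(-1)^p}{\sqrt\pi p^{1/4}}\cos\ffi_n$ and $h_{n+1}=h_{2p+1}\approx \frac{(-1)^p}{\sqrt\pi p^{1/4}}\sin\ffi_{n+1}$) in detail, the odd case being identical up to a harmless sign. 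Writing each $h$ as (main trig term)$+\tilde E$, the numerator $h_{n+1}(x)h_n(y)-h_{n+1}(y)h_n(x)$ splits into a ``main $\times$ main'' product plus three terms each carrying at least one factor $\tilde E$.

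For the main term, the product of the two amplitudes is $\frac{1}{\sqrt\pi\, p^{1/4}}\cdot\frac{1}{\sqrt\pi\, p^{1/4}}=\frac{1}{\pi\sqrt p}$ (up to the $(-1)^p$ squared $=1$), and the trigonometric part is
$$
\sin\ffi_{n+1}(x)\cos\ffi_n(y)-\sin\ffi_{n+1}(y)\cos\ffi_n(x),
$$
which I would rewrite via product-to-sum as
$\tfrac12\bigl[\sin(\ffi_{n+1}(x)+\ffi_n(y))-\sin(\ffi_{n+1}(y)+\ffi_n(x))\bigr]$ plus a term involving $\ffi_{n+1}(x)-\ffi_n(y)$ and $\ffi_{n+1}(y)-\ffi_n(x)$. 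Here is where Lemma~\ref{lem:simpasympt} does the work: by \eqref{eq:ffi}, $\ffi_{n+1}(x)+\ffi_n(x)-\ffi_{n+1}(y)-\ffi_n(y)=2N(x-y)+\eps_n(x,y)$ with $|\eps_n|\le \frac{T^2}{\sqrt{2n+1}}|x-y|$, and by \eqref{eq:lipffi0}/\eqref{eq:lipffi2} the ``mismatched'' combinations are $O(T/\sqrt{2n+1})$. Since $\sin$ is $1$-Lipschitz and $\bigl|\frac{\sin u}{u}-\frac{\sin v}{v}\bigr|$-type differences are controlled by $|u-v|$, replacing all the $\ffi$'s by the linear function $\sqrt{2n+1}\,t$ (or rather producing $N(x-y)$) costs only $O(T^2/\sqrt{2n+1})$ after dividing by $x-y$; one also absorbs the amplitude discrepancy $\frac{1}{\pi\sqrt p}-\frac1{\pi\sqrt{2n+1}}\cdot(\text{something})$ using $p(t)=\sqrt{2n+1-t^2}$, $|t|\le T\le\frac12\sqrt{2n+1}$, which gives a relative error $O(T^2/(2n+1))$. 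Multiplying the whole numerator by the prefactor $\sqrt{(n+1)/2}\sim \frac{\sqrt{2n+1}}{2}\sim \frac N2$ converts $\frac1{\pi\sqrt{2n+1}}\cdot\frac{\sin N(x-y)}{x-y}\cdot\frac{N}{\sqrt{2n+1}}$-type expressions into $\frac1\pi\frac{\sin N(x-y)}{x-y}$, and I must check that the leftover constants in front of $\sqrt{(n+1)/2}$ versus $N/2$ again only contribute $O(T^2/\sqrt{2n+1})$.

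For the error terms, each of the three contains a $\tilde E$, bounded by $\frac{2T^2}{(2n+1)^{5/4}}$ in sup-norm and Lipschitz with constant $3T^2/(2n+1)^{3/4}$; the other factor is a bounded trig term times an amplitude of size $O((2n+1)^{-1/4})$ (or, after I extract it, $O(1)$). The subtlety is the division by $x-y$: I would \emph{not} bound $|h_{n+1}(x)h_n(y)-h_{n+1}(y)h_n(x)|$ crudely, but instead write each such difference as $A(x)B(y)-A(y)B(x)=A(x)(B(y)-B(x))+B(x)(A(x)-A(y))$ and use the Lipschitz bounds \eqref{eq:esttildeE}, \eqref{eq:lipffixxx} so that the factor $x-y$ cancels; this turns a ratio like $\frac{\tilde E_{n+1}(x)h_n(y)-\tilde E_{n+1}(y)h_n(x)}{x-y}$ into something of size $\|\tilde E\|_\infty\cdot\text{Lip}(h_n)+\|h_n\|_\infty\cdot\text{Lip}(\tilde E)=O\bigl(\frac{T^2}{(2n+1)^{5/4}}\sqrt{2n+1}+\frac{T^2}{(2n+1)^{3/4}}\bigr)=O\bigl(T^2/(2n+1)^{3/4}\bigr)$; multiplying by $\sqrt{(n+1)/2}=O(\sqrt{2n+1})$ yields $O(T^2/(2n+1)^{1/4})$. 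The $\tilde E\times\tilde E$ term is smaller still. Summing the main-term error $O(T^2/\sqrt{2n+1})$ and the three $O(T^2/(2n+1)^{1/4})$ contributions and tracking the explicit numerical constants carefully — this bookkeeping is the only real obstacle — should land at the claimed $|R_n(x,y)|\le \frac{17T^2}{\sqrt{2n+1}}$, once one uses $n\ge 2T^2\ge 8$ to convert the lower powers of $2n+1$ in the denominators into $\sqrt{2n+1}$ with room to spare for the constant.
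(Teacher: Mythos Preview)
Your strategy is exactly the paper's: plug Corollary~\ref{cor:simpasympt} into Christoffel--Darboux, split the numerator into the main trigonometric product plus three cross terms each carrying at least one $\tilde E$, treat the main product via product-to-sum and Lemma~\ref{lem:simpasympt}, and treat the cross terms with $A(x)B(y)-A(y)B(x)=A(x)\bigl(B(y)-B(x)\bigr)+\bigl(A(x)-A(y)\bigr)B(x)$ together with \eqref{eq:esttildeE} and \eqref{eq:lipffixxx}.

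Two slips need fixing. First, in ``absorbs the amplitude discrepancy \dots\ using $p(t)=\sqrt{2n+1-t^2}$'': the $p$ in $\frac{1}{\pi\sqrt p}$ is the \emph{integer} $p=n/2$, not the WKB function $p(t)$; what you actually need there is simply $\frac{1}{\pi\sqrt p}=\sqrt{\frac{2}{n+1}}\cdot\frac{1}{\pi}\bigl(1+O(1/n)\bigr)$, which is precisely what cancels the Christoffel--Darboux prefactor $\sqrt{(n+1)/2}$. Second, and more importantly, your cross-term estimate forgets the amplitude you mentioned one line earlier: the ``other factor'' is not $h_n$ but $\frac{1}{\sqrt\pi\, p^{1/4}}\cos\ffi_n$ (or $\frac{1}{\sqrt\pi\, p^{1/4}}\sin\ffi_{n+1}$), so its sup norm is $O\bigl((2n+1)^{-1/4}\bigr)$ and its Lipschitz constant is $O\bigl((2n+1)^{1/4}\bigr)$, not $O(1)$ and $O(\sqrt{2n+1})$. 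With the amplitude restored, each cross term is $O\bigl(T^2/(2n+1)\bigr)$ before multiplying by $\sqrt{(n+1)/2}$, hence $O\bigl(T^2/\sqrt{2n+1}\bigr)$ after --- the same order as the main-term error, and the constants add up comfortably below $17$. As you wrote it, the bound $O\bigl(T^2/(2n+1)^{1/4}\bigr)$ cannot be ``converted'' into $O\bigl(T^2/\sqrt{2n+1}\bigr)$ using $n\ge 2T^2$: that would require $(2n+1)^{1/4}\le C$, which fails for large $n$.
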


\begin{remark} The same estimate holds for $T=1$ provided $n\geq 6$.
\end{remark}

\begin{proof} For sake of simplicity, we will only prove the theorem in the case when $n$ is even and write $n=2p$.

Let $\lambda=\sqrt{2n+1}$, $\mu=\sqrt{2n+3}$, $\alpha=\frac{1}{\sqrt{\pi}p^{1/4}}$,
$\beta=\frac{1}{\sqrt{\pi}p^{1/4}}$, $E=(-1)^p\tilde E_{2p}$ and $F=(-1)^p\tilde E_{2p+1}$.
Then, according to Lemma \ref{lem:simpasympt},
$$
\left\{\begin{matrix}h_{2p}(x)&=&(-1)^p\bigl(\frac{1}{\sqrt{\pi}p^{1/4}}\cos\ffi_{2p}(x)+E(x)\bigr)\\
h_{2p+1}(x)&=&(-1)^p\bigl(\frac{1}{\sqrt{\pi}p^{1/4}}\sin\ffi_{2p+1}(x)+F(x)\bigr)
\end{matrix}\right..
$$
Therefore, $h_{2p+1}(x)h_{2p}(y)-h_{2p+1}(y)h_{2p}(x)$ is
\begin{eqnarray*}
&=&
\frac{1}{\pi p^{1/2}}\bigl(\sin\ffi_{2p+1}(x)\cos\ffi_{2p}(y)-\sin\ffi_{2p+1}(y)\cos\ffi_{2p}(x)\bigr)\\
&&+\frac{1}{\sqrt{\pi}p^{1/4}}\bigl(F(x)\cos\ffi_{2p}(y)-F(y)\cos\ffi_{2p}(x)\bigr)\\
&&+\frac{1}{\sqrt{\pi}p^{1/4}}\bigl(\sin\ffi_{2p+1}(x)E(y)-\sin\ffi_{2p+1}(y)E(x)\bigr)\\
&&+F(x)E(y)-F(y)E(x)\\
&=&H_1(x,y)+H_2(x,y)+H_3(x,y)+H_4(x,y).
\end{eqnarray*}

--- The first term in the equation above is the principal one. Let us start by computing
\begin{eqnarray*}
A:=\sin\ffi_{2p+1}(x)\cos\ffi_{2p}(y)&-&\sin\ffi_{2p+1}(y)\cos\ffi_{2p}(x)\\
&=&\frac{1}{2}\bigl[\sin\bigl(\ffi_{2p+1}(x)+\ffi_{2p}(y)\bigr)-\sin\bigl(\ffi_{2p+1}(x)-\ffi_{2p}(y)\bigr)\\
&&\qquad-\sin\bigl(\ffi_{2p+1}(y)+\ffi_{2p}(x)\bigr)+\sin\bigl(\ffi_{2p+1}(y)-\ffi_{2p}(x)\bigr)\bigr]\\
&=&\sin\frac{\ffi_{2p+1}(x)-\ffi_{2p+1}(y)-\ffi_{2p}(x)+\ffi_{2p}(y)}{2}\\
&&\times\cos\frac{\ffi_{2p+1}(x)+\ffi_{2p+1}(y)+\ffi_{2p}(x)+\ffi_{2p}(y)}{2}\\
&&+\sin\frac{\ffi_{2p+1}(y)+\ffi_{2p}(y)-\ffi_{2p}(x)-\ffi_{2p+1}(x)}{2}\\
&&\times\cos\frac{\ffi_{2p+1}(x)-\ffi_{2p}(x)-\ffi_{2p}(y)+\ffi_{2p+1}(y)}{2}\\
&=&S_1C_1+S_2C_2.
\end{eqnarray*}
Now, according to \eqref{eq:lipffi},
$$
|S_1C_1|\leq|S_1|\leq\frac{|\ffi_{2p+1}(x)-\ffi_{2p+1}(y)-\ffi_{2p}(x)+\ffi_{2p}(y)|}{2}\leq\frac{3}{2\sqrt{2n+1}}|x-y|,
$$
while $S_2C_2= S_2(1+C_2-1)$. But, with \eqref{eq:lipffi2},
$$
|C_2-1|\leq\frac{|\ffi_{2p+1}(x)-\ffi_{2p}(x)-\ffi_{2p}(y)+\ffi_{2p+1}(y)|^2}{2}\leq\frac{25T^2}{2(2n+1)}.
$$
Thus, with \eqref{eq:ffi},
$$
|S_2(C_2-1)|\leq\left(N+\frac{T^2}{\sqrt{2n+1}}\right)|x-y|\frac{25T^2}{2(2n+1)}\leq\frac{16T^2}{\sqrt{2n+1}}|x-y|.
$$
Finally, using again Lemma \ref{lem:simpasympt}, $\sin\bigl(N(y-x)+\eps_n(y,x)\bigr)$ is
\begin{eqnarray*}
&=&\sin N(y-x)+\sin N(y-x)\bigl(\cos\eps_n(y,x)-1\bigr)+\cos N(y-x)\sin\eps_n(x,y)\\
&=&\sin N(y-x)+E_2(x,y),
\end{eqnarray*}
where
$$
|E_2(x,y)|\leq |\eps_n(x,y)|+\frac{|\eps_n(x,y)|^2}{2}\leq \frac{2T^2}{\sqrt{2n+1}}|x-y|.
$$
Grouping those estimates leads to
$$
A=\sin N(y-x)+E_3(x,y)\quad\mbox{with}\quad |E_3(x,y)|\leq\frac{39T^2}{2\sqrt{2n+1}}|x-y|
$$
Notice, that
$$
\frac{1}{\pi p^{1/2}}=\frac{1}{\pi}\sqrt{\frac{2}{n+1}}\sqrt{1+\frac{1}{n}}=\sqrt{\frac{2}{n+1}}\frac{1}{\pi}\left(1
+\frac{\xi_n}{n}\right)
$$
with $|\xi_n|\leq1/2$.

We, thus, conclude that $H_1(x,y)=\dst\sqrt{\frac{2}{n+1}}\left(\frac{1}{\pi}\sin N(y-x)+E_4(x,y)\right)$, with
$$
|E_4(x,y)|\leq\dst\frac{1}{\pi}\left(1+\frac{\xi_n}{n}\right)|E_3(x,y)|+\frac{\xi_n}{\pi n}N|x-y|
\leq \frac{5T^2}{\sqrt{2n+1}}|x-y|.
$$
\medskip

--- Consider
$$
F(x)\cos\ffi_{2p}(y)-F(y)\cos\ffi_{2p}(x)=F(x)\bigl(\cos\ffi_{2p}(y)-\cos\ffi_{2p}(x)\bigr)
+\bigl(F(x)-F(y)\bigr)\cos\ffi_{2p}(x).
$$
Then, according to \eqref{eq:esttildeE},
$\abs{\bigl(F(x)-F(y)\bigr)\cos\ffi_{2p}(x)}\leq|F(x)-F(y)|\leq\frac{3T^2}{(2n+1)^{3/4}}|x-y|$, while
\begin{eqnarray*}
\abs{F(x)\bigl(\cos\ffi_{2p}(y)-\cos\ffi_{2p}(x)\bigr)}&\leq&\frac{2T^2}{(2n+1)^{5/4}}|\ffi_{2p}(y)-\ffi_{2p}(x)|\\
&\leq&\frac{2T^2}{(2n+1)^{5/4}}\frac{5}{4}\sqrt{2n+1}|x-y|=\frac{5T^2}{2(2n+1)^{3/4}}|x-y|,
\end{eqnarray*}
with \eqref{eq:lipffixxx}. Therefore,
$$
|H_2(x,y)|\leq\frac{1}{\sqrt{\pi}p^{1/4}}\frac{(5/2+3)T^2}{(2n+1)^{3/4}}|x-y|
\leq\sqrt{\frac{2}{n+1}}\frac{3T^2}{2(2n+1)^{1/2}}|x-y|.
$$
Similarly, the estimate $|H_3(x,y)|\leq\sqrt{\frac{2}{n+1}}\frac{3T^2}{2(2n+1)^{1/2}}|x-y|$
holds.

\smallskip

Note that for $T=1$, we have to use \eqref{eq:esttildeET1} instead of \eqref{eq:esttildeE}
which gives 
$$
|H_2(x,y)|,|H_3(x,y)|\leq\sqrt{\frac{2}{n+1}}\frac{5T^2}{(2n+1)^{1/2}}|x-y|.
$$

\medskip

--- Finally, according to \eqref{eq:esttildeE},
\begin{eqnarray*}
|F(x)E(y)-F(y)E(x)|&\leq&|F(x)||E(y)-E(x)|+|E(x)||F(x)-F(y)|\\
&\leq&\frac{12T^4}{(2n+1)^2}|x-y|\leq \sqrt{\frac{2}{n+1}}\frac{2T^2}{\sqrt{2n+1}}|x-y|.
\end{eqnarray*}

\smallskip

Note that for $T=1$, we have to use \eqref{eq:esttildeET1} instead of \eqref{eq:esttildeE}
which gives $|H_4(x,y)|\leq\sqrt{\frac{2}{n+1}}\frac{24}{(2n+1)^{3/2}}|x-y|$

\medskip

Grouping terms together, we obtain,
$$
h_{2p+1}(x)h_{2p}(y)-h_{2p+1}(y)h_{2p}(x)=\sqrt{\frac{2}{n+1}}\left(\frac{1}{\pi}\sin N(y-x)+E_5(x,y)\right),
$$
with $\dst |E_5(x,y)|\leq \frac{9T^2}{\sqrt{2n+1}}|x-y|$.
\end{proof}

\subsection{A tail estimate}\label{sec:kernel2}
Let us now establish a tail estimate for $k_n$.

\begin{proposition}
\label{prop:tail}
Let $T\geq 2$ and $n\geq 2T^2$. Then,
for $|x|\leq T$,
$$
\int_{|y|\geq 2T} k_n(x,y)^2\,\mbox{d}y\leq \frac{2}{\pi^2 T}+\frac{12T^2}{\sqrt{2n+1}}\ln(2n+1).
$$
\end{proposition}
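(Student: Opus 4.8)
The plan rests on the reproducing property of the projection kernel together with the comparison of $k_n$ with a sinc kernel supplied by Theorem~\ref{th:estkn}. Since $k_n(x,y)=\sum_{k=0}^{n}h_k(x)h_k(y)$ with $(h_k)$ orthonormal, $\int_\R k_n(x,z)k_n(z,y)\,\mathrm{d}z=k_n(x,y)$, and in particular $\int_\R k_n(x,y)^2\,\mathrm{d}y=k_n(x,x)$. Setting $y=x$ in Theorem~\ref{th:estkn} gives $k_n(x,x)=\frac N\pi+R_n(x,x)$ with $N=\frac{\sqrt{2n+1}+\sqrt{2n+3}}2$ and $|R_n(x,x)|\le\frac{17T^2}{\sqrt{2n+1}}$, so that
$$\int_{|y|\ge 2T}k_n(x,y)^2\,\mathrm{d}y=k_n(x,x)-\int_{|y|<2T}k_n(x,y)^2\,\mathrm{d}y,$$
and the task reduces to bounding $\int_{|y|<2T}k_n(x,y)^2\,\mathrm{d}y$ from below. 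Let $g(x,y):=\frac1\pi\,\frac{\sin N(x-y)}{x-y}$. From the classical identity $\int_\R\frac{\sin^2 Nu}{u^2}\,\mathrm{d}u=\pi N$ one gets $\int_\R g(x,y)^2\,\mathrm{d}y=\frac N\pi$, and since $|x|\le T,\ |y|\ge 2T$ force $|x-y|\ge T$,
$$\int_{|y|\ge 2T}g(x,y)^2\,\mathrm{d}y=\frac1{\pi^2}\int_{\{|x-u|\ge 2T\}}\frac{\sin^2 Nu}{u^2}\,\mathrm{d}u\le\frac1{\pi^2}\int_{|u|\ge T}\frac{\mathrm{d}u}{u^2}=\frac2{\pi^2T},$$
hence $\int_{|y|<2T}g(x,y)^2\,\mathrm{d}y\ge\frac N\pi-\frac2{\pi^2T}$.

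I would then split on the size of $n$. If $2T^2\le n<8T^2$, discard $\int_{|y|<2T}k_n^2\ge0$ and use $\int_{|y|\ge 2T}k_n^2\le k_n(x,x)\le\frac N\pi+\frac{17T^2}{\sqrt{2n+1}}$; since $N\sqrt{2n+1}\le 2n+3<18T^2$ and $n\ge 2T^2\ge8$ gives $12\ln(2n+1)-17\ge 12\ln17-17>\frac{18}\pi$, an elementary computation shows $\frac N\pi+\frac{17T^2}{\sqrt{2n+1}}\le\frac{12T^2}{\sqrt{2n+1}}\ln(2n+1)$, which is stronger than the claim. If $n\ge 8T^2$, then $n\ge 2(2T)^2$, so Theorem~\ref{th:estkn} applied with $2T$ in place of $T$ (the constant $N$ being unchanged) gives, for $|x|\le T$ and $|y|\le 2T$, the decomposition $k_n(x,y)=g(x,y)+R_n(x,y)$ with $|R_n(x,y)|\le\frac{68T^2}{\sqrt{2n+1}}$. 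Writing $k_n^2=g^2+2gR_n+R_n^2$ and discarding $R_n^2\ge0$,
$$\int_{|y|<2T}k_n(x,y)^2\,\mathrm{d}y\ge\frac N\pi-\frac2{\pi^2T}-2\int_{|y|<2T}|g(x,y)|\,|R_n(x,y)|\,\mathrm{d}y,$$
and substituting this above (using $|R_n(x,x)|\le\frac{17T^2}{\sqrt{2n+1}}$ from the $T$‑version of Theorem~\ref{th:estkn}) yields
$$\int_{|y|\ge 2T}k_n(x,y)^2\,\mathrm{d}y\le\frac2{\pi^2T}+\frac{17T^2}{\sqrt{2n+1}}+2\int_{|y|<2T}|g(x,y)|\,|R_n(x,y)|\,\mathrm{d}y.$$

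The last term is where the logarithm enters: with $u=x-y$ ranging over $(x-2T,x+2T)\subset(-3T,3T)$,
$$2\int_{|y|<2T}|g|\,|R_n|\,\mathrm{d}y\le\frac2\pi\cdot\frac{68T^2}{\sqrt{2n+1}}\int_{-3T}^{3T}\frac{|\sin Nu|}{|u|}\,\mathrm{d}u=\frac{136T^2}{\pi\sqrt{2n+1}}\cdot2\int_0^{3TN}\frac{|\sin v|}{v}\,\mathrm{d}v,$$
and $\int_0^M\frac{|\sin v|}{v}\,\mathrm{d}v\le 1+\ln M$ for $M\ge1$ (split at $v=1$), combined with $N\le\sqrt{2n+3}$ and $\ln T\le\frac12\ln(2n+1)$ (valid since $n\ge 8T^2$), turns the cross term into a bound of the form $\frac{C\,T^2}{\sqrt{2n+1}}\ln(2n+1)$ with an explicit $C$; absorbing $\frac{17T^2}{\sqrt{2n+1}}\le\frac{17T^2}{\sqrt{2n+1}}\ln(2n+1)$ gives the Proposition. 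Reaching exactly the coefficient $12$ requires a little extra care — the sharper off‑diagonal size of $R_n$ that is visible inside the proof of Theorem~\ref{th:estkn}, and the sharper estimate $\int_0^M\frac{|\sin v|}{v}\,\mathrm{d}v\le\frac2\pi\ln M+O(1)$.

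The main obstacle is really a matter of choosing what to compare: to keep the leading constant $\frac2{\pi^2T}$ (rather than, say, $\frac4{\pi^2T}$ coming from a triangle‑inequality split of $k_n^2$ on $\{|y|\ge2T\}$), one is forced to compare $k_n(x,\cdot)$ with the \emph{whole} sinc kernel $g(x,\cdot)$ through $\int_\R k_n^2=k_n(x,x)\approx\frac N\pi=\int_\R g^2$; this is only legitimate when the approximate‑kernel estimate of Theorem~\ref{th:estkn} is available up to $|y|=2T$, which is why the threshold $n\ge 8T^2$ appears and the smaller values of $n$ are disposed of separately. Granting that, the only genuine work left is the slightly delicate bookkeeping of $\int_{|y|<2T}|g|\,|R_n|$, in which the factor $\ln(2n+1)$ is unavoidable since $\int\frac{|\sin Nu|}{|u|}\,\mathrm{d}u\asymp\ln N$.
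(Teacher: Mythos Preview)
Your approach is essentially the same as the paper's: both use the reproducing kernel identity $\int_\R k_n(x,y)^2\,\mathrm{d}y=k_n(x,x)$, subtract the integral over $[-2T,2T]$, expand $k_n^2=g^2+2gR_n+R_n^2$ there via Theorem~\ref{th:estkn}, discard $R_n^2\ge 0$, compute $\int g^2$ exactly, and extract the logarithm from the cross term $\int |g|\,|R_n|$ through $\int_0^{M}\frac{|\sin v|}{v}\,\mathrm{d}v\lesssim \ln M$.

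The one structural difference is your case split $2T^2\le n<8T^2$ versus $n\ge 8T^2$. The paper applies Theorem~\ref{th:estkn} directly on $[-2T,2T]$ under the hypothesis $n\ge 2T^2$ without further comment; your observation that, as stated, the theorem would require $n\ge 2(2T)^2$ is a fair point of rigor, and your disposal of the small range by the trivial bound $\int_{|y|\ge 2T}k_n^2\le k_n(x,x)$ is clean. On the constant: the paper reaches $12$ by using the sharper bound $|R_n|\le \frac{9T^2}{\sqrt{2n+1}}$ obtained at the end of the proof of Theorem~\ref{th:estkn} (rather than the stated $17$), so your remark that the coefficient $12$ needs the ``sharper off-diagonal size of $R_n$'' is exactly right.
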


\begin{proof}
First, using the reproducing kernel property of $k_n$,
$$
\int_\R k_n(x,y)k_n(z,y)\,\mbox{d}y=k_n(x,z).
$$
But, since $\dst k_n(x,y)=\sum_{k=0}^nh_k(x)h_k(y)$ and  $h_k=H_ke^{-x^2/2}$, with $H_k$ a polynomial of degree $k$,
there exists a constant $C_n$, such that
$$
|k_n(x,y)|\leq C_n(1+|x|)^n(1+|y|)^ne^{-(x^2+y^2)/2}.
$$
Applying Lebesgue's Dominated Converence Theorem, we have
$$
\dst\int_\R k_n(x,y)k_n(z,y)\,\mbox{d}y\to \int_\R k_n(x,y)^2\,\mbox{d}y,
$$
when $z\to x$. On the other hand, Theorem \ref{th:estkn} shows that
$$
k_n(x,z)\to\frac{N}{\pi}+R_n(x,x)
$$
uniformly in $x\in[-T,T]$. Therefore,
\begin{equation}
\label{eq:estnormkn}
\int_\R k_n(x,y)^2\,\mbox{d}y=\frac{N}{\pi}+R_n(x,x),\quad |R_n(x)|\leq\frac{9T^2}{\sqrt{2n+1}}.
\end{equation}

Now, for $|x|\leq T$,  Theorem \ref{th:estkn} shows that
\begin{eqnarray*}
\int_{[-2T,2T]} k_n(x,y)^2\,\mbox{d}y&=&\int_{[-2T,2T]}\left(\frac{1}{\pi}\frac{\sin N(y-x)}{y-x}
+R_n(x,y)\right)^2\,\mbox{d}y\\
&=&\frac{1}{\pi^2}\int_{[-2T,2T]}\frac{\sin^2 N(y-x)}{(y-x)^2}\,\mbox{d}y+R_n(x).
\end{eqnarray*}
The estimation of the first term is classical: for $|x|\leq T$,
\begin{eqnarray*}
\frac{1}{\pi^2}\int_{-2T}^{2T}\frac{\sin^2 N(y-x)}{(y-x)^2}\,\mbox{d}y
&=&\frac{N}{\pi^2}\int_{-N(2T+x)}^{N(2T-x)}\frac{\sin^2 z}{z^2}\,\mbox{d}z\\
&=&\frac{N}{\pi^2}\int_\R\frac{\sin^2 z}{z^2}\,\mbox{d}z
-\frac{N}{\pi^2}\left(\int_{-\infty}^{-N(2T+x)}+\int_{N(2T-x)}^{+\infty}\right)\frac{\sin^2 z}{z^2}\,\mbox{d}z\\
&=&\frac{N}{\pi}-R^1_N(x),
\end{eqnarray*}
where $\dst 0\leq R^1_N(x)\leq \frac{2N}{\pi^2}\int_{N T}^{+\infty}\frac{\mbox{d}z}{z^2}= \frac{2}{\pi^2 T}$.

Next, we write $R_n(x)=R_n^2(x)+R_n^3(x)$, where
$\dst
R_n^2(x)=\int_{[-2T,2T]}R_n(x,y)^2\,\mbox{d}y\geq 0
$
and
\begin{eqnarray*}
|R_n^3|&=&\int_{[-T,T]}\frac{2}{\pi}\abs{\frac{\sin N(y-x)}{y-x}R_n(x,y)}\,\mbox{d}y\\
&\leq&\frac{18T^2}{\pi\sqrt{2n+1}}\int_{-N(T-x)}^{N(T-x)}\abs{\frac{\sin z}{z}}\,\mbox{d}z\\
&\leq&\frac{36T^2}{\pi\sqrt{2n+1}}\int_0^{2NT}\min(1,z^{-1})\,\mbox{d}z
\leq \frac{12T^2}{\sqrt{2n+1}}\ln(2n+1).
\end{eqnarray*}

It follows that for $|x|\leq T$
\begin{eqnarray*}
\int_{|y|\geq 2T} k_n(x,y)^2\,\mbox{d}y&\leq&R_n^1+|R_n^3|\leq \frac{2}{\pi^2 T}+\frac{12T^2}{\sqrt{2n+1}}\ln(2n+1)
\end{eqnarray*}
as announced.
\end{proof}

\subsection{Approximating almost time and band limited functions by Hermite functions}\label{sec:mainth}

We can now prove Theorem \ref{th:mainhermite}:

\begin{theorem}\label{th:projhermite}
Let $\Omega_0,T_0\geq2$ and $\eps_T,\eps_\Omega>0$. Assume that
$$
\int_{|t|>T_0}|f(t)|^2\,\mathrm{d}t\leq \eps_T^2\norm{f}_{L^2(\R)}^2
\quad\mbox{and}\quad
\int_{|\omega|>\Omega_0}|\widehat{f}(\omega)|^2\,\mathrm{d}\omega\leq \eps_\Omega^2\norm{f}_{L^2(\R)}^2.
$$
For $n$ an integer, let $K_nf$ be the orthogonal projection of $f$
on the span of $h_0,\ldots,h_n$.

Assume that $n\geq \max(2T^2,2\Omega^2)$. Then, for $T\geq T_0$,
\begin{equation}
\label{eq:approxhermloc}
\norm{f-K_nf}_{L^2([-T,T])}
\leq\left(2\eps_T+\eps_\Omega+\frac{34T^3}{\sqrt{2n+1}}\right)\norm{f}_{L^2(\R)}
\end{equation}
and, for $T\geq 2T_0$,
\begin{equation}
\label{eq:approxhermglob}
\norm{f-K_nf}_{L^2(\R\setminus[-T,T])}
\left(2\eps_T+\frac{1}{2 T^{1/2}}+\frac{12T^{5/2}}{\sqrt{2n+1}}\ln(2n+1)\right)^{1/2}\norm{f}_{L^2(\R)}.
\end{equation}
\end{theorem}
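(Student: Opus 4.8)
The strategy is to split $f$ in the usual way into the part handled by the good approximation of the kernel $k_n$ and the ``tail'' parts controlled by the almost time- and band-limitedness hypotheses. Write $P_T$ for multiplication by $\mathbf 1_{[-T,T]}$ and recall $K_nf(x)=\int_\R k_n(x,y)f(y)\,\mathrm dy$. The first reduction is $f-K_nf = (f-P_{\Omega}^{\mathrm{band}}f) + (P_{\Omega}^{\mathrm{band}}f - K_nf)$ where $P_\Omega^{\mathrm{band}}$ is the Fourier projection onto $[-\Omega,\Omega]$; since $h_0,\dots,h_n$ span (almost) the same space as functions band-limited to $\sqrt{2n+1}$, and $n\ge 2\Omega^2$ so $\sqrt{2n+1}\ge 2\Omega$, the band-limited part of $f$ is almost entirely captured by $K_n$. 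More precisely I would use that $K_n$ is an orthogonal projection, so $\norm{f-K_nf}_{L^2(\R)}\le\norm{f-g}_{L^2(\R)}$ for any $g$ in the span, and then estimate against a suitable truncation; alternatively one shows directly that $\norm{(I-K_n)P_\Omega^{\mathrm{band}}f}_{L^2}\le\eps_\Omega\norm f_{L^2}$-type bounds by comparing with the sinc kernel.

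For \eqref{eq:approxhermloc}, the cleanest route is: first replace $f$ by $P_{T_0}f$ at cost $\eps_T\norm f$, then observe $\norm{f-K_nf}_{L^2([-T,T])}\le\norm{P_{T_0}f-K_n(P_{T_0}f)}_{L^2([-T,T])}+\eps_T\norm f+\norm{K_n(f-P_{T_0}f)}_{L^2([-T,T])}$. The middle-ish term $\norm{K_n(f-P_{T_0}f)}$ is $\le\norm{f-P_{T_0}f}_{L^2}\le\eps_T\norm f$ since $K_n$ is a contraction, giving the $2\eps_T$. For $g:=P_{T_0}f$, supported in $[-T_0,T_0]\subseteq[-T,T]$, we have $g-K_ng = g - \int_{[-T,T]}k_n(x,y)g(y)\,\mathrm dy$, and here we invoke Theorem \ref{th:estkn}: on $[-T,T]\times[-T,T]$, $k_n(x,y)=\frac1\pi\frac{\sin N(x-y)}{x-y}+R_n(x,y)$ with $|R_n|\le 17T^2/\sqrt{2n+1}$. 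The sinc part is (a rescaling of) the band-limiting projection $P_N^{\mathrm{band}}$ restricted to the interval, and $g$ is $\eps_\Omega$-almost band-limited with $\Omega\le N/2\cdot(\text{something})\le N$, so $\norm{g-\frac1\pi\int\frac{\sin N(\cdot-y)}{\cdot-y}g(y)\,\mathrm dy}\lesssim\eps_\Omega\norm g$; the $R_n$ contribution is bounded by $\|R_n\|_{\infty}\cdot|T-(-T)|\cdot\norm{g}_{L^1([-T,T])}\le \frac{17T^2}{\sqrt{2n+1}}\cdot 2T\cdot\sqrt{2T}\norm g_{L^2}$, which after bookkeeping is absorbed into the stated $34T^3/\sqrt{2n+1}$. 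Collecting the three pieces yields \eqref{eq:approxhermloc}.

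For \eqref{eq:approxhermglob}, the region is $\R\setminus[-T,T]$ with $T\ge 2T_0$. Again write $f-K_nf$ on this set; the contribution of $f$ itself there is $\le\eps_T\norm f$ directly (since $T\ge T_0$). The contribution of $K_nf$ there is $\norm{K_nf}_{L^2(\R\setminus[-T,T])}$, and since again we may replace $f$ by $P_{T_0}f$ up to $\eps_T\norm f$ and $K_n$ is a contraction, it suffices to bound $\bigl(\int_{|x|\ge T}\bigl|\int_{[-T_0,T_0]}k_n(x,y)g(y)\,\mathrm dy\bigr|^2\mathrm dx\bigr)^{1/2}$ where $g=P_{T_0}f$, $\norm g\le\norm f$. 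By Cauchy--Schwarz in $y$, this is at most $\norm g_{L^2}\cdot\bigl(\int_{|x|\ge T}\int_{[-T_0,T_0]}k_n(x,y)^2\,\mathrm dy\,\mathrm dx\bigr)^{1/2}$, and by symmetry of $k_n$ and Proposition \ref{prop:tail} (applied with $|y|\le T_0$ and the outer variable $|x|\ge 2T_0$, which is implied by $|x|\ge T\ge 2T_0$; note Proposition \ref{prop:tail} is stated as $\int_{|y|\ge 2T}k_n(x,y)^2\,\mathrm dy\le \frac{2}{\pi^2T}+\frac{12T^2}{\sqrt{2n+1}}\ln(2n+1)$ for $|x|\le T$, so one applies it with the roles adjusted), the inner integral is $\le\frac{2}{\pi^2 T}+\frac{12T^2}{\sqrt{2n+1}}\ln(2n+1)$ for each $y$. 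Multiplying by the measure $2T_0\le T$ of the $y$-interval gives the inner double integral $\le\frac{1}{4T}+\frac{12T^3}{\sqrt{2n+1}}\ln(2n+1)$ roughly, whence the square-root bound $\frac{1}{2T^{1/2}}+\bigl(\frac{12T^{3}}{\sqrt{2n+1}}\ln(2n+1)\bigr)^{1/2}$; combining with the $2\eps_T$ from the two substitutions produces \eqref{eq:approxhermglob}.

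The main obstacle is the honest treatment of the sinc term: one must show that the operator $g\mapsto\frac1\pi\int_\R\frac{\sin N(x-y)}{x-y}g(y)\,\mathrm dy$ really is the Fourier band-limiting projection onto $[-N,N]$ (it is, by the Fourier-inversion computation $\widehat{\mathrm{sinc}}=c\,\mathbf 1_{[-1,1]}$), and that restricting the $y$-integration to $[-T,T]$ rather than all of $\R$ costs only $O(1/\sqrt T)$ in $L^2([-T,T])$ — the ``Dirichlet-kernel leakage'' that is handled in the proof of Proposition \ref{prop:tail} and must be quoted correctly here. The bookkeeping of the numerical constant $34$ (versus $17$, $9$, etc.) is tedious but routine once these structural facts are in place; I would carry the three error sources ($2\eps_T$ from time-truncation twice, $\eps_\Omega$ from replacing band-limited projection by $K_n$, and the $R_n$/leakage term proportional to $T^3/\sqrt{2n+1}$) through the computation and verify their sum is $\le 34T^3/\sqrt{2n+1}$ under $n\ge 2T^2$.
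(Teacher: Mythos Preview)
Your overall strategy matches the paper's: time-truncate, invoke Theorem~\ref{th:estkn} to replace $K_n$ by the band-limiting projection $Q_N$ plus the remainder operator $\mathcal R_n^T$, and control the pieces by the almost-time/band-limitedness of $f$ and by Proposition~\ref{prop:tail}. For \eqref{eq:approxhermglob} your Cauchy--Schwarz-then-Fubini argument and the paper's Minkowski argument yield the identical bound $\sqrt{2T_0}\,\bigl(\sup_{|y|\le T_0}\int_{|x|\ge 2T_0}k_n(x,y)^2\,\mathrm dx\bigr)^{1/2}\norm{f}_{L^2}$, so that part is fine.

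There is, however, a genuine gap in your treatment of \eqref{eq:approxhermloc}. You assert that $g:=P_{T_0}f$ is $\eps_\Omega$-almost band-limited, whence $\norm{g-Q_Ng}\lesssim\eps_\Omega\norm g$. This is false in general: time-truncation spreads the spectrum, so $\widehat g$ need not be small on $\{|\omega|>N\}$ even though $\widehat f$ is. The paper avoids this by \emph{not} comparing $g$ with $Q_Ng$; instead it writes $P_TQ_NP_Tf=P_TQ_Nf-P_TQ_N(f-P_Tf)$ and bounds $\norm{P_Tf-P_TQ_Nf}\le\norm{f-Q_Nf}\le\eps_\Omega\norm f$ (using the band-limitedness of $f$ itself) together with $\norm{P_TQ_N(f-P_Tf)}\le\norm{f-P_Tf}\le\eps_T\norm f$. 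If you patch your version via $\norm{\widehat g}_{L^2(|\omega|>N)}\le\norm{\widehat f}_{L^2(|\omega|>N)}+\norm{f-g}_{L^2}\le(\eps_\Omega+\eps_T)\norm f$, you end up with $3\eps_T$ rather than the stated $2\eps_T$.

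Conversely, your ``main obstacle'' --- the Dirichlet-kernel leakage from restricting the $y$-integration to $[-T,T]$ --- is a phantom: since $g$ is supported in $[-T_0,T_0]\subset[-T,T]$, one has $\int_{-T}^T\frac{\sin N(x-y)}{\pi(x-y)}g(y)\,\mathrm dy=Q_Ng(x)$ exactly, with no $O(1/\sqrt T)$ correction needed. Finally, your pointwise bookkeeping on the $R_n$ term carries a spurious factor $\sqrt{2T}$; the paper simply bounds $\norm{P_T\mathcal R_n^TP_Tf}$ by the Hilbert--Schmidt norm $\|\mathcal R_n^T\|_{HS}\le 2T\,\|R_n\|_\infty\le 34T^3/\sqrt{2n+1}$, which delivers the constant directly.
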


\begin{remark}
As the proof of \eqref{eq:approxhermloc} only depends on Theorem \ref{th:estkn}, this estimate
holds for $T=1$, provided we assume that $n\geq 6$ (see the remark following Theorem \ref{th:estkn}).
\end{remark}

\begin{proof}
We will introduce several projections. For $T,\Omega>0$, let
$$
P_Tf=\mathbf{1}_{[-T,T]}f\quad\mbox{and}\quad Q_\Omega f=\ff^{-1}\bigl[\mathbf{1}_{[-\Omega,\Omega]}\widehat{f}].
$$
A simple computation shows that
$$
Q_\Omega f(x)=\frac{1}{\pi}\int_{\R}\frac{\sin\Omega(x-y)}{x-y}f(y)\,\mbox{d}y.
$$
The hypothesis on $f$ is that $\norm{f-P_Tf}_{L^2(\R)}\leq \eps_T\norm{f}_{L^2(\R)}$ for $T\geq T_0$
and $\norm{f-Q_\Omega f}_{L^2(\R)}\leq\eps_\Omega\norm{f}_{L^2(\R)}$ for $\Omega\geq\Omega_0$.

Finally, recall that the projection on the $n$ first Hermite functions, is given by
$$
K_nf(x)=\sum_{k=0}^n\scal{f,h_k}h_k(x)=\int_\R k_n(x,y)f(y)\,\mbox{d}y.
$$

\medskip

It is enough to prove \eqref{eq:approxhermloc} for $T=T_0$.
Let us recall the integral operator
$$
\mathcal{R}_n^T f(x)=\int_{[-T,T]}R_n(x,y)f(y)\,\mbox{d}y,
$$
where $R_n(x,y)$ are defined in Theorem \ref{th:estkn}.
Notice that $k_n(x,y)=k_n(y,x)$ so that $R_n(x,y)=R_n(y,x)$.
We may then reformulate Theorem \ref{th:estkn} as following:
$$
P_TK_nP_Tf=P_TQ_NP_Tf+P_T\mathcal{R}_n^T P_Tf,
$$
where $N=\frac{\sqrt{2n+1}+\sqrt{2n+3}}{2}$. Note that $N\geq\Omega_0$.
By using \eqref{th:estkn}, it is easy to see that 
\begin{eqnarray}
\norm{P_T \mathcal R_n^T P_Tf}_{L^2(\R)}&\leq& \norm{P_T \mathcal R_n^T P_T}_{L^2(\R)\to L^2(\R)}\norm{f}_{L^2(\R)}
\|  \mathcal R_n^T\|_{HS} \norm{f}_{L^2(\R)}\nonumber\\
&\leq&\frac{34T^3}{\sqrt{2n+1}}\norm{f}_{L^2(\R)}.\label{norm_Rn}
\end{eqnarray}
%

Now, using the fact that projections are contractive and $N\geq\Omega_0$, we have
\begin{eqnarray*}
\norm{f-K_nf}_{L^2([-T,T])}&=&\norm{P_Tf-P_TK_nf}_{L^2(\R)}\\
&\leq& \norm{P_Tf-P_TK_nP_Tf}_{L^2(\R)}+\norm{P_TK_n(f-P_Tf)}_{L^2(\R)}\\
&\leq&\norm{P_Tf-P_TQ_NP_Tf+P_T\mathcal{R}_n^T P_Tf}_{L^2(\R)}+\norm{f-P_Tf}_{L^2(\R)}\\
&\leq&\norm{P_Tf-P_TQ_NP_Tf}_{L^2(\R)}+\norm{P_T\mathcal{R}_n^T P_Tf}_{L^2(\R)}+\norm{f-P_Tf}_{L^2(\R)}.
\end{eqnarray*}
Now, write $P_TQ_NP_Tf=P_TQ_Nf+P_TQ_N(f-P_Tf)$, then
\begin{eqnarray*}
\norm{P_Tf-P_TQ_NP_Tf}_{L^2(\R)}&\leq& \norm{P_Tf-P_TQ_Nf}_{L^2(\R)}+\norm{P_TQ_N(f-P_Tf)}_{L^2(\R)}\\
&\leq&\norm{f-Q_Nf}_{L^2(\R)}+\norm{f-P_Tf}_{L^2(\R)}.
\end{eqnarray*}

Therefore,
\begin{eqnarray*}
\norm{f-K_nf}_{L^2([-T,T])}
&\leq&\norm{f-Q_Nf}_{L^2(\R)}+\frac{34T^3}{\sqrt{2n+1}}\norm{f}_{L^2(\R)}+2\norm{f-P_Tf}_{L^2(\R)}\\
&\leq&\left(\eps_\Omega+\frac{34T^3}{\sqrt{2n+1}}+2\eps_T\right)\norm{f}_{L^2(\R)},
\end{eqnarray*}
since $N\geq\Omega_0$.

\medskip

Let us now prove \eqref{eq:approxhermglob}. It is enough to prove it for $T=2T_0$.
Note that
\begin{eqnarray*}
\norm{f-K_nf}_{L^2(\R\setminus[-2T_0,2T_0])}&\leq& \norm{f}_{L^2(\R\setminus[-2T_0,2T_0])}+\norm{K_nP_Tf}_{L^2(\R\setminus[-2T_0,2T_0])}\\
&&+\norm{K_n(f-P_{T_0})}_{L^2(\R)}\\
&\leq& 2\eps_T\norm{f}_{L^2(\R)}^2+\norm{K_nP_{T_0}f}_{L^2(\R\setminus[-2T_0,2T_0])}.
\end{eqnarray*}
We, therefore, need to estimate
$$
\norm{K_nP_Tf}_{L^2(\R\setminus[-2T_0,2T_0])}=\left(\int_{|x|\geq 2T_0}\abs{\int_{|y|\leq T_0}k_n(x,y)f(y)\,\mbox{d}y}^2\,\mbox{d}x\right)^{1/2}.
$$
Using Minkowski's inequality, this quantity is bounded by
$$
\int_{|y|\leq T_0}\left(\int_{|x|\geq 2T_0}\abs{k_n(x,y)f(y)}^2\,\mbox{d}x\right)^{1/2}\,\mbox{d}y
=\int_{|y|\leq T_0}\left(\int_{|x|\geq 2T_0}\abs{k_n(x,y)}^2\,\mbox{d}x\right)^{1/2}|f(y)|\,\mbox{d}y
$$
\begin{eqnarray*}
&\leq&\left(\sup_{|y|\leq T_0}\int_{|x|\geq 2T_0}\abs{k_n(x,y)}^2\,\mbox{d}x\right)^{1/2}
\left(\int_{|y|\leq T_0}|f(y)|\,\mbox{d}y\right)^{1/2}\\
&\leq&2\left(\frac{2}{\pi^2 T_0}+\frac{6T_0^2}{\sqrt{2n+1}}\ln(2n+1)\right)^{1/2}\norm{f}_{L^1([-T_0,T_0])}\\
&\leq&2\sqrt{2}\left(\frac{1}{\pi^2 T_0^{1/2}}+\frac{6T_0^{5/2}}{\sqrt{2n+1}}\ln(2n+1)\right)^{1/2}\norm{f}_{L^2(\R)},
\end{eqnarray*}
which is, with Proposition \ref{prop:tail}, complete the proof.
\end{proof}

\rep{Extended remark}
\begin{remark}
The error estimate given by \eqref{eq:approxhermloc} is not practical due to the  low decay rate
of the bound of $\|\mathcal R_n^T\|$ given by ${\displaystyle \frac{34 T^3}{\sqrt{2n+1}}}.$  
By replacing this later with  a non explicit but a more realistic error estimate $\|\mathcal R_n^T\|_{HS},$
one gets the following  error estimate  which is more practical for numerical purposes,
 \begin{equation}\label{eq2:approxhermloc}
  \norm{f-K_nf}_{L^2([-T,T])}
 \leq\left(\eps_\Omega+\| \mathcal R_n^T\|_{HS}+2\eps_T\right)\norm{f}_{L^2(\R)}.
 \end{equation}
Note also that the factor of $\| \mathcal R_n^T\|_{HS}$ is actually $\norm{f}_{L^2([-T,T])}$, to see this, it is enough
to write $P_Tf=P_TP_Tf$ in \eqref{norm_Rn}. If one has an $L^1$ bound for $f$, one may replace this term with the following
computation:
\begin{eqnarray}
\norm{P_T \mathcal R_n^T P_Tf}_{L^2(\R)}^2&=&\int_{-T}^T\abs{\int_{-T,T}R_n(x,y)f(y)\,\mbox{d}y}^2\,\mbox{d}x\\
&\leq&\int_{-T}^T\sup_{y\in[-T,T]}|R_n(x,y)|^2\,\mbox{d}x\,\left(\int_{-T}^T|f(y)|\,\mbox{d}y\right)^2.
\end{eqnarray}
Thus, with Theorem \ref{th:estkn}, one obtains
$$
\norm{P_T \mathcal R_n^T P_Tf}_{L^2(\R)}\leq \frac{17 T^{5/2}}{n^{1/2}}\int_{-T}^T|f(y)|\,\mbox{d}y.
$$
\end{remark}

\subsection{Approximating almost time and band limited functions by scaled Hermite functions}\label{sec:scale}
\rep{Inverted the way we scale here to be coherent with the next section}

For $\alpha>0$ and $f\in L^2(\R)$ we define the scaling operator $\delta_\alpha f(x)=\alpha^{-1/2}f(x/\alpha)$.
Recall that $\norm{\delta_\alpha f}_{L^2(\R)}=\norm{f}_{L^2(\R)}$ while
$$
\norm{\delta_\alpha f}_{L^2([-A,A])}=\norm{f}_{L^2([-A/\alpha ,A/\alpha A])},\ 
\norm{\delta_\alpha f}_{L^2(\R\setminus[-A,A])}=\norm{f}_{L^2(\R\setminus[-A/\alpha,A/\alpha])}
$$
and $\ff[\delta_\alpha f]=\delta_{1/\alpha}\ff[f]$.
In particular, if $f$ is $\eps_T$-almost time limited to $[-T,T]$ (resp. $\eps_\Omega$-almost band limited to $[-\Omega,\Omega]$) then
$\delta_\alpha f$ is $\eps_T$-almost time limited to $[-T/\alpha,T/\alpha]$

Next, define the scaled Hermite basis $h_k^\alpha=\delta_{\alpha}h_k$
which is also an orthonormal basis of $L^2(\R)$ and define the corresponding orthogonal projections:
for $f\in L^2(\R)$,
\begin{equation}
\label{scaled_approx}
K^\alpha_nf=\sum_{k=0}^n\scal{f,h_k^\alpha}h_k^\alpha.
\end{equation}

\begin{proposition}\label{th:scaled}
Let $\alpha>0$, $T\geq2$ and $c\geq 2/\alpha$.
Assume that and
$$
\int_{|t|>T}|f(t)|^2\,\mathrm{d}t\leq \eps_T^2\norm{f}_{L^2(\R)}^2
\quad\mbox{and}\quad
\int_{|\omega|>c/\alpha}|\widehat{f}(\omega)|^2\,\mathrm{d}\omega\leq \eps_{c/\alpha}^2\norm{f}_{L^2(\R)}^2.
$$
Then, for $n\geq \max(2(T/\alpha)^2 ,2c^2)$, we have
\begin{equation}
\label{approx_scaled}
\norm{f-K_n^\alpha f}_{L^2([-T,T])}\leq\left(\eps_T+\eps_{c/\alpha}+\frac{24(T/\alpha)^3}{\sqrt{2n+1}}\right)\norm{f}_{L^2(\R)}.
\end{equation}
\end{proposition}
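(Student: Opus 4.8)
The plan is to deduce Proposition \ref{th:scaled} from Theorem \ref{th:projhermite} by conjugating everything with the dilation $\delta_\alpha$. The three facts I would use are that $\delta_\alpha$ is unitary on $L^2(\R)$, that it intertwines the time cut-off, $P_T\delta_\alpha=\delta_\alpha P_{T/\alpha}$, and that $\ff[\delta_{1/\alpha}f]=\delta_\alpha\widehat f$. Since $\delta_\alpha$ is unitary, $\scal{f,h_k^\alpha}=\scal{f,\delta_\alpha h_k}=\scal{\delta_{1/\alpha}f,h_k}$, so the scaled projection factors as $K_n^\alpha=\delta_\alpha\circ K_n\circ\delta_{1/\alpha}$. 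Setting $g=\delta_{1/\alpha}f$ one gets $f-K_n^\alpha f=\delta_\alpha(g-K_ng)$, whence, using $\norm{\delta_\alpha u}_{L^2([-T,T])}=\norm{u}_{L^2([-T/\alpha,T/\alpha])}$,
\begin{equation*}
\norm{f-K_n^\alpha f}_{L^2([-T,T])}=\norm{g-K_ng}_{L^2([-T/\alpha,T/\alpha])},
\end{equation*}
so it suffices to bound the right-hand side.

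Next I would check that $g$ satisfies the hypotheses of Theorem \ref{th:projhermite} with time radius $T/\alpha$ and band radius $c$. For the time localisation, $\norm{g}_{L^2(\R\setminus[-T/\alpha,T/\alpha])}=\norm{\delta_{1/\alpha}f}_{L^2(\R\setminus[-T/\alpha,T/\alpha])}=\norm{f}_{L^2(\R\setminus[-T,T])}\le\eps_T\norm{f}_{L^2(\R)}=\eps_T\norm{g}_{L^2(\R)}$. For the band localisation, $\widehat g=\delta_\alpha\widehat f$, so $\norm{\widehat g}_{L^2(\R\setminus[-c,c])}=\norm{\widehat f}_{L^2(\R\setminus[-c/\alpha,c/\alpha])}\le\eps_{c/\alpha}\norm{g}_{L^2(\R)}$. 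Moreover $n\ge\max\bigl(2(T/\alpha)^2,2c^2\bigr)$ is precisely the hypothesis, and with $N=\frac{\sqrt{2n+1}+\sqrt{2n+3}}{2}\ge\sqrt{2n+1}\ge 2c\ge c$ the band cut-off $Q_N$ is at least as sharp as the one on $[-c,c]$. Applying Theorem \ref{th:projhermite} to $g$, in the boundary case where the error interval coincides with the time-limiting interval, yields
\begin{equation*}
\norm{g-K_ng}_{L^2([-T/\alpha,T/\alpha])}\le\Bigl(2\eps_T+\eps_{c/\alpha}+\frac{34(T/\alpha)^3}{\sqrt{2n+1}}\Bigr)\norm{g}_{L^2(\R)},
\end{equation*}
and since $\norm{g}_{L^2(\R)}=\norm{f}_{L^2(\R)}$ this is already an estimate of the announced shape.

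To recover the sharper constants $\eps_T+\eps_{c/\alpha}+\frac{24(T/\alpha)^3}{\sqrt{2n+1}}$ I would not quote Theorem \ref{th:projhermite} verbatim but re-run its short argument for $g$ on $[-T/\alpha,T/\alpha]$ with tighter bookkeeping: measure the factor multiplying $\norm{\mathcal{R}_n^{T/\alpha}}$ as $\norm{g}_{L^2([-T/\alpha,T/\alpha])}$ (as in the remark following Theorem \ref{th:projhermite}), estimate $\norm{\mathcal{R}_n^{T/\alpha}}$ by the Schur test from Theorem \ref{th:estkn} rather than by the cruder Hilbert--Schmidt bound, and regroup the $Q_N$-step so that only one copy of $\norm{g-P_{T/\alpha}g}\le\eps_T\norm g$ survives. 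Matching the precise constants $1$ and $24$ in the statement is then a matter of careful but routine accounting.

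The main obstacle is not analytic but concerns the admissible ranges of parameters. Theorems \ref{th:estkn} and \ref{th:projhermite}, on which the reduction rests, were proved under $T\ge2$, with the fall-back $T=1$, $n\ge6$ recorded in the remarks, whereas in the rescaled picture the effective radii $T/\alpha$ and $c$ can drop below $2$ once $\alpha$ is large --- exactly the regime where scaling is interesting, since it shrinks $(T/\alpha)^3$ at the cost of a smaller band interval $[-c/\alpha,c/\alpha]$, hence a larger $\eps_{c/\alpha}$. The remedy is to enlarge whichever of $[-T/\alpha,T/\alpha]$, $[-c,c]$ is too small up to the nearest admissible value --- using that being $\eps$-almost localised to a set is inherited by larger sets --- and to invoke the $T=1$ versions of Theorems \ref{th:estkn} and \ref{th:projhermite}, or equivalently to re-run their proofs with the rescaled parameters substituted. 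Once the ranges are reconciled, the rest is the bookkeeping sketched above.
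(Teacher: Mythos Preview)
Your reduction via $K_n^\alpha=\delta_\alpha K_n\delta_{1/\alpha}$ is correct and is the same idea as the paper's proof. The one variation is that the paper first truncates and then rescales, working with $f_\alpha=\delta_{1/\alpha}(P_Tf)$ rather than your $g=\delta_{1/\alpha}f$; this makes $f_\alpha$ exactly time-limited to $[-T/\alpha,T/\alpha]$ (so the $2\eps_T$ contribution from Theorem~\ref{th:projhermite} vanishes) at the price of a degraded band-limitation constant $\eps_{c/\alpha}+\eps_T$. Your direct rescaling keeps both constants $\eps_T$ and $\eps_{c/\alpha}$ intact and is a little cleaner; the resulting bounds have the same shape. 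The parameter-range issue you flag --- that $T/\alpha$ may fall below the threshold $2$ needed in Theorems~\ref{th:estkn} and~\ref{th:projhermite} --- is genuine, and the paper's proof simply glosses over it, ending with ``apply Theorem~\ref{th:projhermite}'' without addressing the range. Likewise, neither the paper's displayed argument nor a verbatim application of Theorem~\ref{th:projhermite} yields the precise constants $1$ and $24$ in the statement (one gets $2\eps_T$ and $34$), so your remark that matching them would require re-running the estimate with tighter bookkeeping is fair; the paper is silent on this.
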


\begin{remark}
The scaling with $\alpha>1$ has as effect to decrease the dependence on $T$ at the price of increasing
the dependence on good frequency concentration, while taking $\alpha<1$ the gain and loss are reversed.
In practice, the above dependence on $T$ is very pessimistic and $\alpha>1$ is a better choice. The most natural choice is $\alpha=T$
and $c=T\Omega$ where $\Omega$ is such that $f$ is $\eps_\Omega$-almost band limited to $[-\Omega,\Omega]$.
\end{remark}

\begin{proof} For $f\in L^2(\R)$, since $K_n^\alpha$ is contractive, we have
\begin{eqnarray*}
\norm{f-K_n^\alpha f}_{L^2([-T,T])}&\leq& \norm{f-K_n^\alpha P_T f}_{L^2([-T,T])}+\norm{K_n^\alpha(f- P_T f)}_{L^2([-T,T])}\\
&\leq& \norm{f-K_n^\alpha P_T f}_{L^2([-T,T])}+\norm{f- P_T f}_{L^2([-T,T])}\\
&\leq&\norm{f-K_n^\alpha P_T f}_{L^2([-T,T])}+\eps_T\norm{f}_{L^2(\R)}.
\end{eqnarray*}
Moreover, $K_n^\alpha P_T f(x)$ is
\begin{eqnarray*}
&=&\sum_{k=0}^n\scal{P_T f,h_k^\alpha}h_k^\alpha(x)=\int_{-T}^T
f(y)\frac{1}{\alpha}\sum_{k=0}^nh_k(x/\alpha)h_k(y/\alpha)\,\mbox{d}y\\
&=&\int_{-T/\alpha}^{T/\alpha}f(\alpha t)\sum_{k=0}^nh_k(x/\alpha)h_k(t)\,\mbox{d}t.
\end{eqnarray*}
Therefore $\norm{f-K_n^\alpha P_T f}_{L^2([-T,T])}$ is
\begin{eqnarray*}
&=&
\left(\int_{-T}^T\abs{f(x)-\int_{-T/\alpha}^{T/\alpha}f(\alpha t)\sum_{k=0}^nh_k(x/\alpha)h_k(t)\,\mbox{d}t}^2
\,\mbox{d}x\right)^{1/2}\\
&=&\left(\int_{-T/\alpha}^{T/\alpha}\abs{\alpha^{1/2}f(\alpha s)
-\int_{-T/\alpha}^{T/\alpha}f(\alpha t)\sum_{k=0}^nh_k(x/\alpha)h_k(t)\,\mbox{d}t}^2
\,\mbox{d}s\right)^{1/2}\\
&=&\norm{f_\alpha-K_nf_\alpha}_{L^2([-\alpha T,\alpha T])}
\end{eqnarray*}
where $f_\alpha=\delta_{1/\alpha}\bigl[\mathbf{1}_{[-T,T]}f\bigr]$.
Note that $f_\alpha$ is $0$-almost time limited to $[-T/\alpha,T/\alpha]$. Next, writing
$$
\widehat{f_\alpha}=\delta_{\alpha}\ff[\mathbf{1}_{[-T,T]}f]=\delta_{\alpha}\ff[f]
-\delta_{\alpha}\ff[\mathbf{1}_{\R\setminus [-T,T]}f]
$$
and, noting that
$$
\norm{\delta_{\alpha}\ff[f]}_{L^2(\R\setminus[-c,c])}=\norm{\ff[f]}_{L^2(\R\setminus[-c/\alpha,c/\alpha])}
\leq\eps_{c/\alpha}\norm{f}_{L^2(\R)}
$$
while
\begin{eqnarray*}
\norm{\delta_{\alpha}\ff[\mathbf{1}_{\R\setminus [-T,T]}f]}_{L^2(\R\setminus[-\Omega,\Omega])}
&\leq&\norm{\delta_{\alpha}\ff[\mathbf{1}_{\R\setminus [-T,T]}f]}_{L^2(\R)}\\
&=&\norm{\mathbf{1}_{\R\setminus [-T,T]}f}_{L^2(\R)}\leq\eps_T\norm{f}_{L^2(\R)},
\end{eqnarray*}
we get
$$
\norm{\widehat{f_\alpha}}_{L^2(\R\setminus[-c,c])}
\leq \eps_{c/\alpha}\norm{f}_{L^2(\R)}+\eps_T\norm{f}_{L^2(\R)}.
$$
It remains to apply Theorem \ref{th:projhermite} to complete the proof.
\end{proof}

\section{Numerical results}

In this paragraph, we give several examples that illustrate the different results of this work.\\

\noindent
{\bf Example 1.} In this example, we check numerically that the 
approximation error $\dst E(x,y)=\abs{\sum_{k=0}^n h_k(x) h_k(y)-\frac{\sin N(x-y)}{\pi(x-y)}}$
is much smaller than the theoretical error given by Theorem \ref{th:estkn}.
In order to do so, we consider a uniform discretization $\Lambda$ of the square $[-1,1]^2$ with 6400 equidistant nodes.
We then estimate the uniform approximation error $\sup|E(x,y|)$ by
$\dst \widetilde E_n=\sup_{x,y\in \Lambda}|E(x,y)|$ and the 
Hilbert-Schmidt norm $\|\mathcal R_n^T\|_{HS}$ that appears in \eqref{eq:approxhermloc}
for $10 \leq n\leq 100$.
\rep{Modified paragraph}
\rea{New second row for the table}
\begin{center}
\begin{tabular}{c|c|c|c|c|c}
$n$&10&25&50&75&100\\
\hline
$\widetilde E_n$& 0.067& 0.039& 0.025& 0.023& 0.022\\
\hline
$\widetilde R_n$& 0.051 &0.034& 0.022 & 0.019& 0.017\\
\end{tabular}
\end{center}

\medskip

\noindent{\bf Example 2.} In this example, we illustrate the quality of approximation by scaled Hermite functions of a time limited and an almost band limited 
function. For this purpose, we consider the function $f(x)= \mathbf{1}_{[-1/2,1/2]}(x)$. From the Fourier transform of $f,$ one can easily check that 
$f\in H^s(\mathbb R)$ for any $s< 1/2.$ Note that $f$ is $0-$concentrated in $[-1/2,1/2]$ and since $f\in H^s(\mathbb R),$ then 
$\epsilon_\Omega$-band concentrated in $[-\Omega, +\Omega]$,for any $\epsilon_\Omega < M_s \Omega^{-s}$ 
with $M_s$ a positive constant. We have 
considered the value $\alpha=10$ and we have used \eqref{scaled_approx} to compute the scaled Hermite approximations $K_n^\alpha(f)$ of $f$ with 
$n=40$ and $n=80$. The graphs of $f$ and its 
scaled Hermite approximation are given by Figure \ref{fig1}. In Figure 2, we have given the approximation errors $f(x)-K_n^\alpha f(x).$\\

 \begin{figure}[ht!]
\includegraphics[width=13.5cm]{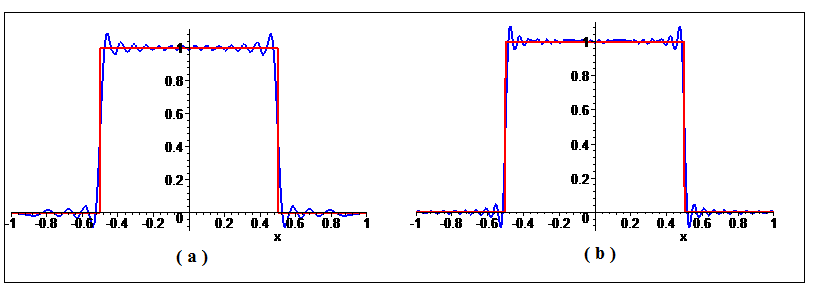}
\caption{The Graph of $f(x)= \mathbf{1}_{[-1/2,1/2]}(x)$ (red) and of $K_n^{c} f(x)$ (blue)
with (a) $n=40$, $\alpha=100$ and (b) $n=80$, $\alpha=10$. Note the Gibbs phenomena that appears.}
\label{fig1}
\end{figure}

 \begin{figure}[ht!]
\includegraphics[width=13.5cm]{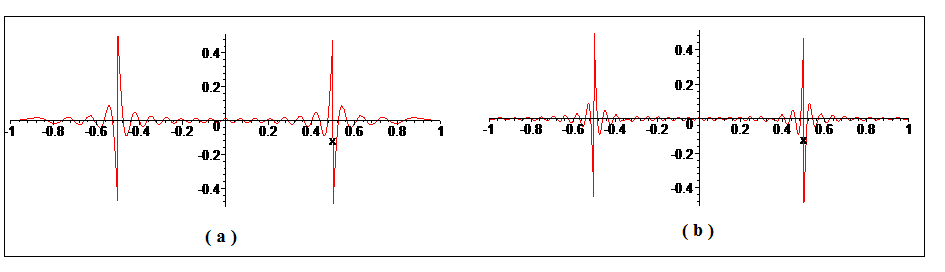}
 \caption{ Graph of the approximation error $f(x)-K_n^{\alpha} f(x)$, $\alpha=10$, (a)  $n=40$  (b) 
	$n=80$.}
	\label{fig2}
\end{figure}

Also, to illustrate the fact that the scaled Hermite approximation outperforms the usual Hermite approximation, we have repeated the previous numerical tests without the scaling factor ({\it i.e.} with $\alpha=1$). 
Figure \ref{fig3} shows the graphs of $f$ and $K_n f$.

 \begin{figure}[ht!]
\includegraphics[width=13.5cm]{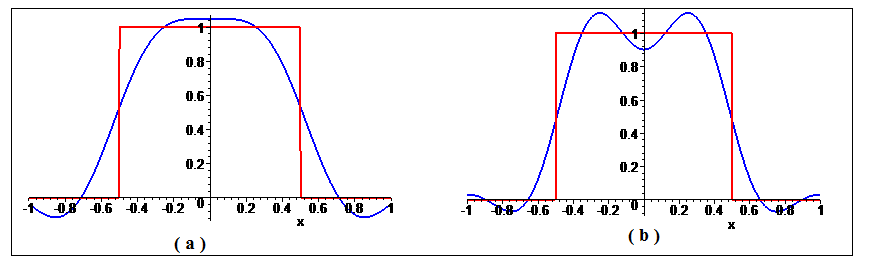}
\caption{The Graph of $f(x)= \mathbf{1}_{[-1/2,1/2]}(x)$ (red) and of $K_n^{\alpha} f(x)$ (blue)
with (a) $n=40$, $\alpha=1$ and (b) $n=80$, $\alpha=1$.}
\label{fig3}
\end{figure} 

\medskip

\noindent
{\bf Example 3.} In this  last example, we illustrate the quality of approximation of almost band limited and time limited function by 
the scaled Hermite functions for the function $g$ given by $g(x) = (1-|x|)\chi_{[-1,1]}(x)$. As is easily seen by expressing the Fourier transform of $g$, $g\in H^s(\mathbf R)$ for any $s<3/2$.
Moreover since $g$ is suppoted on $[-1,1]$,  $g$ is $0$-concentrated in $[-1, 1]$. Moreover, as in the previous example
$g$ is $\epsilon_\Omega$-band concentrated in 
$[-\Omega, +\Omega]$,
for any $\epsilon_\Omega < M_s \Omega^{-s}.$  
We have considered the four  couples 
$(\alpha,n)=(\sqrt{10},20), (\sqrt{10},50), (\sqrt{50},20), (\sqrt{50},50)$ and  computed $K_n^{\alpha} g$.
The numerical results are given by Figures \ref{fig5} and \ref{fig4}. These numerical results suggest again  that the scaled Hermite functions are
well suited for the approximation of almost band limited and almost time limited functions. In this sense, they have similar 
approximation properties as the PSWFs. The actual approximation error is much smaller than the theoretical error given by Theorem 
\ref{th:scaled}
This actual approximation error depends on the truncation order $n$ as well as on the parameter $\alpha$.  

 \begin{figure}[ht!]
  {\includegraphics[width=13.5cm]{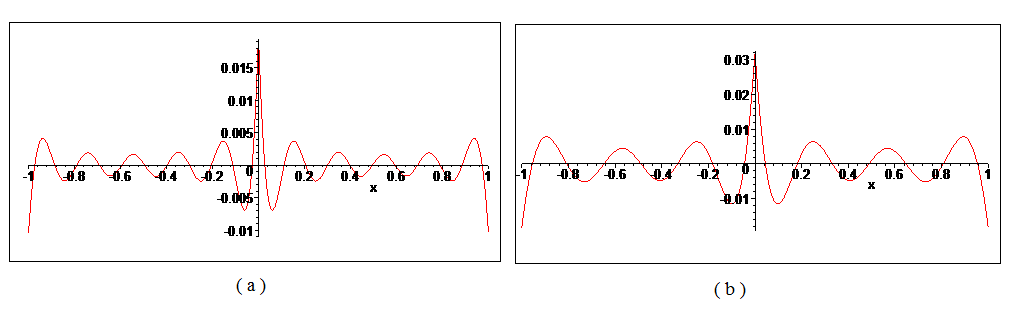}}
  \caption{Graph of the approximation error $g(x)-K_n^{c} g(x)$ with (a) $c=10$, $n=20$ and (b)	$c=10$, $n=50$.}
	\label{fig5}
  \end{figure}

 \begin{figure}[!ht]
  {\includegraphics[width=13.5cm]{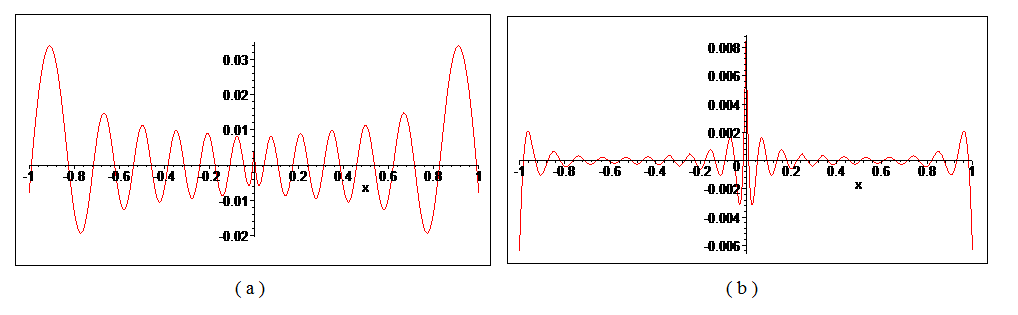}}
  \caption{Graph of the approximation error $g(x)-K_n^{c} g(x)$ with (a) $c=50$, $n=20$ and (b)	$c=50$, $n=50$.}
	\label{fig4}
  \end{figure}

\section*{Acknowledgements}
The first author kindly acknowledge financial support from the French ANR programs ANR
2011 BS01 007 01 (GeMeCod), ANR-12-BS01-0001 (Aventures).
This study has been carried out with financial support from the French State, managed
by the French National Research Agency (ANR) in the frame of the ”Investments for
the future” Programme IdEx Bordeaux - CPU (ANR-10-IDEX-03-02).

\end{document}